\newtheorem{thr}{Theorem}
\newtheorem{lem}{Lemma}
\newtheorem{df}{Definition}
\newtheorem{cor}{Corollary}
\newtheorem{rem}{Remark}
\newcommand{\bes}{\begin{displaymath}}
\newcommand{\ees}{\end{displaymath}}
\newcommand{\be}{\begin{equation}}
\newcommand{\ee}{\end{equation}}
\newcommand{\ba}{\begin{eqnarray}}
\newcommand{\ea}{\end{eqnarray}}
\newcommand{\bas}{\begin{eqnarray*}}
\newcommand{\eas}{\end{eqnarray*}}
\newcommand{\@Bbb}[1]{\ensuremath{\Bbb #1}}
\newcommand{\B}{{\@Bbb B}}
\newcommand{\C}{{\@Bbb C}}
\newcommand{\E}{{\@Bbb E}}
\newcommand{\F}{{\@Bbb F}}
\newcommand{\G}{{\@Bbb G}}
\renewcommand{\P}{{\@Bbb P}}
\newcommand{\Q}{{\@Bbb Q}}
\newcommand{\bQ}{{\@Bbb Q}}
\newcommand{\N}{{\@Bbb N}}
\newcommand{\R}{{\@Bbb R}}
\newcommand{\T}{{\@Bbb T}}
\newcommand{\bbR}{{\@Bbb R}}
\newcommand{\W}{{\@Bbb W}}
\newcommand{\Z}{{\@Bbb Z}}
\newcommand{\bbZ}{{\@Bbb Z}}
\newcommand{\@s}[1]{\ensuremath{\mathcal #1}}
\newcommand{\cA}{\@s A}
\newcommand{\cB}{\@s B}
\newcommand{\cC}{\@s C}
\newcommand{\cD}{\@s D}
\newcommand{\cE}{\@s E}
\newcommand{\cF}{\@s F}
\newcommand{\cG}{\@s G}
\newcommand{\cH}{\@s H}
\newcommand{\cI}{\@s I}
\newcommand{\cJ}{\@s J}
\newcommand{\cK}{\@s K}
\newcommand{\cL}{\@s L}
\newcommand{\cN}{\@s N}
\newcommand{\cM}{\@s M}
\newcommand{\cO}{\@s O}
\newcommand{\cP}{\@s P}
\newcommand{\cQ}{\@s Q}
\newcommand{\cR}{\@s R}
\newcommand{\cS}{\@s S}
\newcommand{\cT}{\@s T}
\newcommand{\cU}{\@s U}
\newcommand{\cV}{\@s V}
\newcommand{\cW}{\@s W}
\newcommand{\cX}{\@s X}
\newcommand{\cY}{\@s Y}
\newcommand{\cZ}{\@s Z}
\def\d{{\rm d}}
\def\LIM{{\rm LIM}}
\newcommand{\@bm}[1]{\ensuremath{\mathbf #1}}
\newcommand{\bma}{\@bm a}\newcommand{\bmA}{\@bm A}
\newcommand{\bmb}{\@bm b}\newcommand{\bmB}{\@bm B}
\newcommand{\bmc}{\@bm c}\newcommand{\bmC}{\@bm C}
\newcommand{\bmd}{\@bm d}\newcommand{\bmD}{\@bm D}
\newcommand{\bme}{\@bm e}
\newcommand{\bmf}{\@bm f}\newcommand{\bmF}{\@bm F}
\newcommand{\bmg}{\@bm g}\newcommand{\bmG}{\@bm G}
\newcommand{\bmh}{\@bm h}\newcommand{\bmH}{\@bm H}
\newcommand{\bmi}{\@bm i}\newcommand{\bmI}{\@bm I}
\newcommand{\bmj}{\@bm j}
\newcommand{\bmk}{\@bm k}\newcommand{\bmK}{\@bm K}
\newcommand{\bml}{\@bm l}
\newcommand{\bmm}{\@bm m}\newcommand{\bmM}{\@bm M}
\newcommand{\bmn}{\@bm n}
\newcommand{\bmo}{\@bm o}
\newcommand{\bmp}{\@bm p}
\newcommand{\bmq}{\@bm q}\newcommand{\bmQ}{\@bm Q}
\newcommand{\bmr}{\@bm r}
\newcommand{\bms}{\@bm s}\newcommand{\bmS}{\@bm S}
\newcommand{\bmt}{\@bm t}
\newcommand{\bmu}{\@bm u}\newcommand{\bmU}{\@bm U}
\newcommand{\bmw}{\@bm w}\newcommand{\bmW}{\@bm W}
\newcommand{\bmv}{\@bm v}\newcommand{\bmV}{\@bm V}
\newcommand{\bmx}{\@bm x}\newcommand{\bmX}{\@bm X}\newcommand{\bx}{\@bm x}
\newcommand{\bmy}{\@bm y}\newcommand{\bmY}{\@bm Y}\newcommand{\by}{\@bm y}
\newcommand{\bmz}{\@bm z}\newcommand{\bmZ}{\@bm Z}
\newcommand{\bmzero}{\@bm 0}
\newcommand{\@g}[1]{\ensuremath{\mathfrak #1}}
\newcommand{\gA}{\@g A}
\newcommand{\gD}{\@g D}
\newcommand{\gJ}{\@g J}
\newcommand{\gF}{\@g F}
\newcommand{\gM}{\@g M}
\newcommand{\gR}{\@g R}
\newcommand{\commentout}[1]{{}}
\numberwithin{equation}{section}
\begin{document}


\title[Ergodicity and CLT...]{Ergodicity and Central Limit Theorem for random interval homeomorphisms}
\author{Klaudiusz Czudek and Tomasz Szarek}
\address{Tomasz Szarek, Institute of Mathematics Polish Academy of Sciences, Abrahama 18, 81-967 Sopot, Poland}
\email{tszarek@impan.pl}
\address{Klaudiusz Czudek, Institute of Mathematics Polish Academy of Sciences
\'Sniadeckich 8,  00-656 Warszawa, Poland}
\email{klaudiusz.czudek@gmail.com}

\subjclass[2000]{ Primary 60F05, 60J25; Secondary 37A25, 76N10.}

\keywords{iterated function systems, Markov operators, ergodicity, central limit theorems}

\thanks{The research of Klaudiusz Czudek was supported by the Polish Ministry of Science and Higher Education "Diamond Grant" 0090/DIA/2017/46. Tomasz Szarek was supported by the
  Polish NCN grant 2016/21/B/ST1/00033.}
\begin{abstract} The central limit theorem for Markov chains generated by iterated function systems consisting of orientation--preserving homeomorphisms of the interval is proved. We study also ergodicity of such systems.
\end{abstract}

\maketitle

\section{Introduction}

Random dynamical systems in general and iterated function systems in particular have been extensively studied for many years (see \cite{Arnold_rds, Deroin, Kifer} and the references given there). This note is concerned with iterated function systems generated by orientation--preserving homeomorphisms on the interval $[0, 1]$.
It contains a simple proof of unique ergodicity on the open interval $(0, 1)$ for a wide class of iterated function systems. At first this phenomena was proved by L. Alsed\'a and M. Misiurewicz for some function systems consisting of piecewise linear homeomorphisms (see \cite{Alseda-Misiurewicz}). More general iterated function systems were considered by M. Gharaei and A. I. Homburg in \cite{Homburg}.
Recently D. Malicet obtained unique ergodicity as a consequence of the contraction principle for time homogeneous random walks on the topological group of homeomorphisms defined on the circle and interval  (see \cite{Malicet}). His proof, in turn, is based upon an invariance principle of A. Avila and M. Viana (see \cite{Avila_Viana}).

The second main objective of this note is to establish a quenched central limit theorem for random interval homeomorphisms. 
The proof is based on the Maxwell--Woodroofe approach for ergodic stationary Markov chains (see \cite{Maxwell-Woodroofe}) which generalises the martingale approximation method due to M.D. Gordin and B.A. Lif\v{s}hits (see \cite{Gordin_Lifsic}). Their result allows us to prove the central limit theorem for the stationary Markov chain (the annealed central limit theorem). On the other hand, using some coupling techniques we are able to evaluate the distance between the Fourier transform of the stationary and an arbitrary non--stationary Markov chain. Hence the quenched central limit theorems follows. Lately quenched central limit theorems have been proved  for various non--stationary Markov processes in \cite{Komorowski-Walczuk, Szarek, Stenflo} (see also \cite{Derrenic_Lin}).
 For more information we refer 
the readers to the book by T. Komorowski et al. \cite{Komorowski-book}, 
where a more detailed description of recent results is provided. Many results were formulated for Markov processes with transition probabilities satisfying the spectral gap property in the total variation norm or, at least, in the Kantorovich--Rubinstein norm. 
Since such processes are asymptotically stable and, in particular, have a unique invariant measure, our system does not satisfy this property. Indeed, $\delta_0$ and $\delta_1$ are its invariant measures too. 
 
 The paper is organised as follows. In Section 2 we introduce notation. Section 3 is devoted to the proof of the unique ergodicity and stability of iterated function systems on $(0, 1)$. Section 4 provides some auxiliary lemmas which are used in the proof of the main result (the quenched central limit theorem) in Section 5.

\section{Notation}

Let $(S, d)$ be a metric space. By $\mathcal{M}(S)$ we denote the set of all finite measures  on the $\sigma$-algebra $\mathcal{B}(S)$ of  all Borel subsets of $S$ and by $\mathcal{M}_1(S)\subseteq\mathcal{M}(S)$ we denote the subset of all probability measures on $S$. By $C(S)$ we denote the family of all bounded continuous functions equipped with the supremum norm $\|\cdot \|$. We shall write $\langle \mu, f\rangle$ for $\int_S f\d\mu$.

An operator $P : \mathcal{M}(S)\rightarrow\mathcal{M}(S)$ is called a {\it Markov operator} if it satisfies the following two conditions:
\begin{enumerate}[1)]
\item $P(\lambda_1\mu_1+\lambda_2\mu_2)=\lambda_1P\mu_1+\lambda_2P\mu_2$ for $\lambda_1, \lambda_2\geq0, \mu_1, \mu_2\in\mathcal{M}(S)$,
\item $P\mu(S)=\mu(S)$ for $\mu\in\mathcal{M}(S)$.
\end{enumerate}

A Markov operator $P$ is called a {\it Feller operator} if there is a linear operator $U : C(S)\rightarrow C(S)$ such that $U^*=P$, i.e.,
$$
\langle \mu, Uf\rangle=\langle P\mu, f\rangle\qquad \textrm{for $f\in C(S), \mu\in\mathcal{M}(S)$}.
$$

A measure $\mu_*$ is called {\it invariant} for a Markov operator $P$ if $P\mu_*=\mu_*$. If $S$ is a compact metric space, then every Feller operator $P$ has an invariant probability measure. For example, let $\mu\in\mathcal M_1(S)$ and define $\nu\in C(S)^*$ by $\nu(f)=\LIM(\frac{1}{n}\sum_{k=1}^n\langle P^k\mu, f\rangle)$, where $\LIM$ denotes a Banach limit. By the Riesz Representation Theorem $\nu(f)=\langle \nu, f\rangle$, where $\nu\in\mathcal M_1(S)$ is invariant.

An operator $P$ is called {\it asymptotically stable} if it has a unique invariant measure $\mu_*\in\mathcal{M}_1(S)$ such that the sequence $(P^n\mu)$ converges in the weak-$*$ topology to $\mu_*$ for any $\mu\in\mathcal{M}_1(S)$, i.e.,
$$
\lim_{n\to\infty} \langle P^n\mu, f\rangle=\langle \mu_*, f\rangle
\qquad\text{for any $f\in C(S)$.}
$$

In this paper we shall consider a special type of Feller operators. Assume that  $f_i : [0, 1]\to [0, 1]$ for $i=1,\ldots, N$ are continuous transformations and let $(p_1,\ldots, p_N)$ be a probability vector, i.e., $p_i\ge 0$ for all $i=1,..,N$ and $\sum_{i=1}^N p_i=1$.
The family $(f_1,..., f_N; p_1,...,p_N)$ generates a Markov operator $P : \mathcal{M}([0,1])\rightarrow \mathcal{M}([0,1])$ of the form
\begin{equation}\label{e1111_23.06.19}
P\mu(A) = \sum_{i=1}^N p_i\mu(f_i^{-1}(A))\quad\text{for $A\in \mathcal{B}([0,1])$.} 
\end{equation}
 This Markov operator is a Feller operator and its predual operator $U : C([0,1])\rightarrow C([0,1])$ is given by the formula
$$
U\varphi(x)=\sum_{i=1}^N p_i \varphi(f_i(x)) \ \textrm{for $\varphi\in C([0,1])$ and $x\in [0,1]$.} 
$$
By induction we check that 
\begin{equation}\label{e2_12.09.19}
U^n\varphi(x)=\sum_{i_1=1}^N\cdots\sum_{i_n=1}^Np_{i_1}\cdots p_{i_n}\varphi (f_{i_1}\circ\cdots\circ f_{i_n}(x))
\end{equation}
for $n\in\mathbb N$, $\varphi\in C([0,1])$ and $x\in [0,1]$. 
\vskip3mm
Markov operators corresponding to random transformations have been intensively studied for many years. In particular, W. Doeblin and R. Fortet in \cite{DF} considered the case when the maps $f_i$ were strict contractions but the probabilities $p_i$ were dependent on position, but Lipschitz functions.  
S.R. Foguel and B. Weiss in \cite{FW} considered convex combinations of commuting contractions in Banach spaces. R.~Sine in \cite{Sine} studied random rotations of the unit circle with position dependent probabilities $p_i$. In turn, the connections of random transformations to fractals have been discovered by J. Hutchinson in \cite{H}. M. Barnsley and R. Demko coined the term {\it iterated function systems} for systems with contractions (see \cite{BD}). In \cite{BDEG} the authors considered functions systems contractive on the average in the case where the state space $S$ is locally compact (see also \cite{LY}). Their result on asymptotic stability was extended to Polish spaces in \cite{SZ}. Random transformations, more general than iterated function systems have been also studied, but for more details we refer the reader to Kifer's book (see \cite{Kifer}).
\vskip3mm

We start with the following definitions.

\begin{df}
Let $H^+$ be the space of homeomorphisms $f : [0,1]\rightarrow [0,1]$ satisfying the following properties:
\begin{enumerate}
\item $f$ is increasing,
\item $f$ is a $C^1$--function in the neighbourhood of $0$ and $1$.
\end{enumerate}
\end{df}

\begin{df}
Let $\{f_1,\ldots, f_N\}\subseteq H^+$ be a finite collection of homeomorphisms and let $(p_1,\ldots,p_N)$ be a probability vector such that $p_i>0$ for all $i=1,\ldots,N$. The family $(f_1,\ldots, f_N; p_1,\ldots, p_N)$ is called an admissible iterated function system if
\begin{enumerate}
\item for any $x\in (0,1)$ there exist $i, j\in \{1,\ldots, N\}$ such that $f_i(x)<x<f_j(x)$,
\item $f_i'(0)>0$ and $f_i'(1)>0$ and the Lyapunov exponents at both points $0, 1$ are positive, i.e.,
$$\sum_{i=1}^Np_i\log f_i'(0)>0 \ \textrm{and} \ \sum_{i=1}^Np_i\log f_i'(1)>0.$$
\end{enumerate}
\end{df}

We set $\Sigma=\{1,...,N\}^\mathbb{N}$ and $\Sigma_n=\{1,\ldots, N\}^n$ for $n\in\mathbb N$. 
Put $\Sigma_*=\bigcup_{n=1}^\infty \Sigma_n$. Clearly, a probability vector $(p_1,\ldots,p_N)$ on $\{1,\ldots,N\}$ defines the product measures $\mathbb{P}, \mathbb{P}_n$ on $\Sigma$ and $\Sigma_n$ for $n\in\mathbb N$, respectively. The expected value with respect to $\mathbb P$ is denoted by $\mathbb E$. For any $n\in\mathbb N$ and $\textbf{i}=(i_1,i_2,\ldots)\in \Sigma$ we set $\textbf{i}_{|n}=(i_1,i_2,\ldots,i_n)$. In the same way we define $\textbf{i}_{|n}$ for $\textbf{i}=(i_1,\ldots,i_k)\in\Sigma_k$ with $k\geq n$. Additionally, we assume that $\textbf{i}_{|0}$ is the empty sequence for any $\textbf{i}\in\Sigma\cup\Sigma_*$. For a sequence ${\bf i}\in\Sigma_*$, ${\bf i}=(i_1,\dots ,i_n)$, we denote by $|{\bf i}|$ its length (equal to $n$). We shall write $f_{\textbf{i}}=f_{i_n}\circ f_{i_{n-1}}\circ\cdots\circ f_{i_1}$ for any sequence $\textbf{i}=(i_1,\ldots,i_n)\in\Sigma_n$, $n\in\mathbb N$.

 Let $\sigma : \Sigma\rightarrow \Sigma$ denote the shift transformation, i.e., $\sigma((i_1, i_2,\ldots.)) =(i_2, i_3\ldots)$ for $(i_1, i_2,\ldots)\in\Sigma$. If $\mathbf i=(i_1,\ldots,i_n)\in\Sigma_*$ and $\mathbf j= (j_1,\ldots,j_k)\in \Sigma_*$, then by $\mathbf i\mathbf j$ we denote the concatenation of $\mathbf i $ and $\mathbf j$, i.e., the sequence $(i_1,\ldots, i_n, j_1,\ldots, j_k)\in \Sigma_*$. If $\mathbf i\in \Sigma_*$ and $\mathbf j\in \Sigma$, then we can define concatenation $\mathbf i\mathbf j$ of sequences $\mathbf i$ and $\mathbf j$  in the same way, obtaining the sequence from the space $\Sigma$.  We write ${\mathbf i}\prec{\mathbf j}$ for $\mathbf i\in \Sigma_*, \mathbf j\in\Sigma\cup\Sigma_*$ if there exists ${\mathbf k}\in\Sigma\cup\Sigma_*$ such that ${\mathbf i}{\mathbf k}={\mathbf j}$.
\vskip2mm

Let an admissible iterated function system $(f_1,\ldots, f_N; p_1,\ldots,p_N)$ be given and let $P$ be the corresponding Markov operator defined by formula (\ref{e1111_23.06.19}). For every measure $\nu\in\mathcal M_1$ the law of the Markov chain $(X_n)$ with transition probability $\pi (x, A)=P\delta_x(A)$ for $x\in [0, 1]$, $A\in\mathcal B([0, 1])$ and initial distribution $\nu$, is the probability measure $\mathbb P_{\nu}$ on $([0, 1]^{\mathbb N}, \mathcal B([0, 1])^{\otimes\mathbb N})$ such that:
$$
\mathbb P_{\nu}[X_{n+1}\in A|X_n=x]=\pi (x, A)\quad\text{and}\quad\mathbb P_{\nu}[X_0\in A]=\nu(A),
$$
where $x\in [0, 1]$, $A \in \mathcal B([0, 1])$. The existence of $\mathbb P_{\nu}$ follows from the Kolmogorov Extension Theorem. The expectation with respect to $\mathbb P_{\nu}$ is denoted by $\mathbb E_{\nu}$. For $\nu=\delta_x$, the Dirac measure at $x\in [0, 1]$, we write just $\mathbb P_x$ and $\mathbb E_x$.
Obviously $\mathbb P_{\nu}(\cdot)=\int_{[0, 1]}\mathbb P_x(\cdot)\nu(\d x)$ and $\mathbb E_{\nu}(\cdot)=\int_{[0, 1]}\mathbb E_x(\cdot)\nu(\d x)$. Observe that for $n\in\mathbb N$ and $A_0,\ldots, A_n\in\mathcal B([0, 1])$ we have
$$
\begin{aligned}
&\mathbb P_x((X_0,\ldots, X_n)\in A_0\times\cdots\times A_n))\\
&=\sum_{(i_1,\ldots, i_n)\in\Sigma_n}({\bf 1}_{A_1\times\cdots\times A_n}(f_{i_1}(x),\ldots, f_{(i_n,\ldots, i_1)}(x))p_{i_1}\cdots p_{i_n})\\
&=\int_{\Sigma_n}{\bf 1}_{A_1\times\cdots\times A_n}(f_{i_1}(x),\ldots, f_{(i_n,\ldots, i_1)}(x))\mathbb {P}_n(\d {\bf i})\\
&=\int_{\Sigma}{\bf 1}_{A_1\times\cdots\times A_n}(f_{i_1}(x),\ldots, f_{(i_n,\ldots, i_1)}(x))\mathbb {P}(\d {\bf i}).
\end{aligned}
$$
Consequently,
\begin{equation}\label{e120_23.09.19}
\mathbb E_x(H(X_0,\ldots, X_n))=\int_{\Sigma}H(f_{i_1}(x),\ldots, f_{(i_n,\ldots, i_1)}(x))\mathbb {P}(\d {\bf i})
\end{equation}
and
\begin{equation}\label{e121_23.09.19}
\mathbb E_{\nu}(H(X_0,\ldots, X_n))=\int_{[0, 1]}\int_{\Sigma}H(f_{i_1}(x),\ldots, f_{(i_n,\ldots, i_1)}(x))\mathbb {P}(\d {\bf i})\nu(\d x)
\end{equation}
for an arbitrary bounded Borel--measurable function $H: [0, 1]^n\to\mathbb C$.

\vskip3mm
For $\alpha\in (0, 1)$ and $M\geq 1$  we define the sets $\mathcal{P}_{M,\alpha}^-, \mathcal{P}_{M,\alpha}^+, \mathcal{P}_{M,\alpha} $ as follows:
$$\mathcal{P}_{M,\alpha}^-:=\{\mu\in\mathcal{M}_1([0, 1]): \mu([0,x])\leq Mx^\alpha \  \textrm{for all}\\ \ x\in[0,1]\\ \},$$
$$\mathcal{P}_{M,\alpha}^+:=\{\mu\in\mathcal{M}_1([0, 1]): \ \mu([1-x,1])\leq Mx^\alpha\\ \ \textrm{for all}\\ \ x\in[0,1]\\ \},$$
$$\mathcal{P}_{M, \alpha}:=\mathcal{P}_{M,\alpha}^-\cap \mathcal{P}_{M,\alpha}^+.$$

For $\varepsilon>0$ and  $x<\varepsilon$ we set
$$
A_{x, k}( \varepsilon ):=\{{\bf i}\in\Sigma : f_{\bf i}^k(x)<\varepsilon\}\quad\text{for $k\in\mathbb N$},
$$
where $f_{\bf i}^k=f_{{\bf i}|k}$.
Similarly, for $\varepsilon>0$ and  $x>1-\varepsilon$ we set
$$
A^{x, k}( \varepsilon ):=\{{\bf i}\in\Sigma: f_{\bf i}^k(x)>1-\varepsilon\}\quad\text{for $k\in\mathbb N$}.
$$


\section{Invariant measures}
Let an admissible iterated function system $(f_1,\ldots, f_N; p_1,\ldots, p_N)$ be given and let $P$ be the corresponding Markov operator. The Markov operator $P$ clearly admits two trivial invariant measures: $\delta_0$ and $\delta_1$. In this section we prove the existence of a unique invariant measure $\mu\in\mathcal M_1((0, 1))$. Let us stress that this fact was already shown in \cite{Homburg} and our proof of existence of an invariant measure is just a repetition of the argument. However, the proof of uniqueness is new, more general and elementary. Similarly the proof of stability on the interval $(0, 1)$.  We start with the following lemma.


\begin{lem}
\label{constants}
Let $(f_1,\ldots, f_N; p_1,\ldots, p_N)$ be an admissible iterated function system and let $P$ be the corresponding Markov operator. Then there exist constants $\varepsilon\in (0, \frac 1 2)$, $\alpha, \delta\in (0, 1)$ and $M\ge 1$ such that $P(\mathcal{P}^{\pm}_{M,\alpha})\subseteq \mathcal{P}^{\pm}_{M,\alpha}$ and
\begin{equation}
\label{delta_in_class}
\delta_x\in\mathcal{P}^-_{M, \alpha} \  \text{for} \ x\geq \varepsilon \ \text{and} \ \delta_x\in\mathcal{P}^+_{M, \alpha} \ \text{for} \ x\leq 1-\varepsilon.
\end{equation}
Moreover, for any $n\in\mathbb N$ we have
\begin{equation}
\label{probability_A_0}
\mathbb{P}\bigg(\bigcap_{j=1}^{\lfloor\sqrt[4]{n}\rfloor}A_{x, j}(\varepsilon)\bigg)\leq (1-\delta)^{\frac{\alpha}{2}\lfloor\sqrt[4]{n}\rfloor}\quad\text{for} \ x\in[\varepsilon(1-\delta)^{\frac 1 2 \lfloor\sqrt[4]{n}\rfloor}, \varepsilon]
\end{equation}
and
\begin{equation}
\label{probability_A_1}
\mathbb{P}\bigg(\bigcap_{j=1}^{\lfloor\sqrt[4]{n}\rfloor}A^{x, j}(\varepsilon)\bigg)\leq (1-\delta)^{\frac{\alpha}{2}\lfloor\sqrt[4]{n}\rfloor}\quad\text{for} \ x\in [1-\varepsilon, 1-\varepsilon(1-\delta)^{\frac 1 2 \lfloor\sqrt[4]{n}\rfloor}].
\end{equation}
\end{lem}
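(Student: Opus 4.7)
The plan is to extract both the invariance and the tail bound from positivity of the Lyapunov exponents at $0$ and $1$ together with the $C^1$ control of each $f_i$ near these endpoints. I will concentrate on the ``$-$'' side; the ``$+$'' part is completely symmetric (replace $x$ by $1-x$ and use the Lyapunov exponent at $1$). First I choose $\alpha\in(0,1)$ so small that $\Phi_0(\alpha):=\sum_i p_i(f_i'(0))^{-\alpha}<1$, together with its analogue at $1$; this is possible because $\Phi_0(0)=1$ and $\Phi_0'(0)=-\sum_i p_i\log f_i'(0)<0$. Next, using that each $f_i$ is $C^1$ at $0$ with $f_i(0)=0$, I shrink $\varepsilon\in(0,1/2)$ so that on $(0,\varepsilon]$ one has $f_i^{-1}(y)\le a_iy$ and $f_i(y)\ge b_iy$ for small perturbations $a_i,b_i$ of $1/f_i'(0)$ and $f_i'(0)$ still obeying $\sum_i p_ia_i^\alpha\le 1$ and $\rho:=\sum_i p_ib_i^{-\alpha}<1$. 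Finally I set $M:=\varepsilon^{-\alpha}\ge 1$ and pick $\delta\in(0,1)$ with $(1-\delta)^\alpha=\rho$.

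The inclusion $P(\mathcal P_{M,\alpha}^-)\subseteq\mathcal P_{M,\alpha}^-$ then reduces to two routine cases: for $y\le\varepsilon$ one bounds $\mu([0,f_i^{-1}(y)])\le M(a_iy)^\alpha$ and sums against $p_i$ to get $P\mu([0,y])\le My^\alpha$, while for $y>\varepsilon$ one uses the crude inequality $P\mu([0,y])\le 1=M\varepsilon^\alpha\le My^\alpha$. The point-mass statement \eqref{delta_in_class} is immediate from the same calibration $M=\varepsilon^{-\alpha}$, since for $x\ge\varepsilon$ one has $\delta_x([0,y])=0$ when $y<x$ and $\delta_x([0,y])\le 1=M\varepsilon^\alpha\le My^\alpha$ otherwise.

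For \eqref{probability_A_0} I set $k:=\lfloor\sqrt[4]{n}\rfloor$ and $\tau:=\inf\{j\ge 1:f_{\mathbf i}^{\,j}(x)\ge\varepsilon\}$, so that $\bigcap_{j=1}^k A_{x,j}(\varepsilon)=\{\tau>k\}$. I would run a supermartingale argument on $W_j:=f_{\mathbf i}^{\,j}(x)^{-\alpha}$: the lower bound $f_i(y)\ge b_iy$ on $[0,\varepsilon]$ yields $\mathbb E[W_{j+1}\mathbf 1_{\tau>j+1}\mid\mathcal F_j]\le\rho W_j\mathbf 1_{\tau>j}$, so $\rho^{-j}W_j\mathbf 1_{\tau>j}$ is a positive supermartingale. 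Comparing its starting value $x^{-\alpha}$ with the pointwise lower bound $\rho^{-k}\varepsilon^{-\alpha}\mathbf 1_{\tau>k}$ at time $k$ gives $\mathbb P(\tau>k)\le\rho^k(\varepsilon/x)^\alpha$, and on the stated range $x\ge\varepsilon(1-\delta)^{k/2}$ this is at most $\rho^k(1-\delta)^{-\alpha k/2}=(1-\delta)^{\alpha k/2}$ thanks to the calibration $(1-\delta)^\alpha=\rho$. The inequality \eqref{probability_A_1} follows by the symmetric argument at the right endpoint.

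The main obstacle I anticipate is the joint calibration of $\alpha,\varepsilon,\delta,M$: the $C^1$ remainder terms eat into the spectral gap $1-\Phi_0(\alpha)$, so $\varepsilon$ must be shrunk only after $\alpha$ has been fixed, and $\rho,\delta$ are then determined. This is really bookkeeping; the genuine content is the combination of the negative sign of $\Phi_0'(0)$ with the martingale estimate on $\rho^{-j}W_j$.
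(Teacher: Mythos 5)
Your proposal is correct, and the calibration/invariance part runs exactly as in the paper: choose $\underline\lambda_i<f_i'(0)$ (your $b_i$, $1/a_i$) close enough that the Lyapunov-exponent sums stay positive, Taylor-expand $\lambda^{-\alpha}$ near $\alpha=0$ to fix $\alpha$ and $\delta$ with $\sum_ip_i\underline\lambda_i^{-\alpha}<(1-\delta)^\alpha$, shrink $\varepsilon$ so that the linear bounds on $f_i$ and $f_i^{-1}$ hold on $[0,\varepsilon]$, and set $M=\varepsilon^{-\alpha}$; then $P(\mathcal P^-_{M,\alpha})\subseteq\mathcal P^-_{M,\alpha}$ follows from $P\mu([0,x])\le Mx^\alpha\sum_ip_i\underline\lambda_i^{-\alpha}\le Mx^\alpha$, and \eqref{delta_in_class} is automatic.

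For the tail bound \eqref{probability_A_0}, however, you argue differently from the paper. You run a stopped supermartingale: with $\tau=\inf\{j\ge1:f_{\mathbf i}^{\,j}(x)\ge\varepsilon\}$ and $W_j=f_{\mathbf i}^{\,j}(x)^{-\alpha}$, the one-step drift estimate on $\{\tau>j\}$ gives $\mathbb E[\rho^{-k}W_k\mathbf1_{\tau>k}]\le x^{-\alpha}$, and since $W_k>\varepsilon^{-\alpha}$ on $\{\tau>k\}$ this yields $\mathbb P(\tau>k)\le\rho^k(\varepsilon/x)^\alpha\le(1-\delta)^{\alpha k/2}$. The paper instead exploits monotonicity to replace $x$ by the endpoint $\varepsilon(1-\delta)^{k/2}$, observes that staying below $\varepsilon$ for $k$ steps forces the deterministic product $\underline\lambda_{\mathbf i}^k\le(1-\delta)^{-k/2}$, and then applies Markov's inequality to $\underline\lambda_{\mathbf i}^{-\alpha k}$ with $\mathbb E[\underline\lambda_{\mathbf i}^{-\alpha k}]\le(1-\delta)^{\alpha k}$. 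Both routes are two lines and give the identical exponent; the paper's is a little more elementary (pure Chebyshev on an i.i.d.\ product, no stopping time), while your supermartingale/optional-stopping version is somewhat more robust — it would survive position-dependent probabilities or a driver that is merely Markov rather than i.i.d. One small bookkeeping remark: you introduce separate constants $a_i$ for $f_i^{-1}$ and $b_i$ for $f_i$, but a single $\underline\lambda_i<f_i'(0)$ serves both roles (as in the paper) and slightly simplifies the calibration, since the conditions $\sum p_ia_i^\alpha\le1$ and $\sum p_ib_i^{-\alpha}<1$ collapse to one.
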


\begin{proof}
By the definition of an admissible iterated function system we can find positive numbers $\underline{\lambda}_1<f_1'(0),\ldots, \underline{\lambda}_N<f_N'(0), \overline{\lambda}_1<f'_1(1),\ldots, \overline{\lambda}_N<f'_N(1)$ such that still
\begin{equation}
\label{lyapunov_exponents}
\sum_{i=1}^N p_i\log \underline{\lambda}_i>0\quad\text{and}\quad \sum_{i=1}^N p_i\log \overline{\lambda}_i>0.
\end{equation}
Using the definition of derivative at points $0$ and $1$ we can choose $\varepsilon<\frac 1 2$ such that
\begin{equation}
\label{lambdas_estimation}
f_i(x)\geq \underline{\lambda}_ix\quad\text{and} \quad f_i(1-x)\leq \overline{\lambda}_i(1-x)
\end{equation}
for $i=1,\ldots, N$ and $x\leq \varepsilon$. Now we use the definition of derivative for functions $f^{-1}_1,\ldots, f^{-1}_N$ at points $0$ and $1$ and correct the choice of $\varepsilon$ to satisfy the conditions:
\begin{equation}
\label{lambdas_estimation_inv}
f^{-1}_i(x)\leq \frac{x}{\underline{\lambda}_i}\quad\text{and}\quad f^{-1}_i(1-x)\geq 1-\frac{x}{\overline{\lambda}_i}
\end{equation}
for $i=1,\ldots, N$ and $x\leq \varepsilon$.

Writing the Taylor expansion of the function $\lambda^{-\alpha}$ at 0 with respect to $\alpha$ we obtain $\lambda^{-\alpha}=1-\alpha\log\lambda +o(\alpha)$. Therefore, using (\ref{lyapunov_exponents}) we can fix $\alpha\in (0,1)$ and $\delta>0$ such that we have
\begin{equation}
\label{expectation_lambdas}
\sum_{i=1}^N p_i\underline{\lambda_i}^{-\alpha}<(1-\delta)^\alpha\quad\text{and} \quad \sum_{i=1}^N p_i\overline{\lambda_i}^{-\alpha}<(1-\delta)^\alpha.
\end{equation}
Finally, put $M=\varepsilon^{-\alpha}$. This immediately implies condition (\ref{delta_in_class}).

Now we are showing the invariance of $\mathcal{P}_{M, \alpha}$. Let $\mu \in \mathcal{P}^-_{M, \alpha}$ and $x\in (0,1)$. We are going to show that $P\mu\in \mathcal{P}^-_{M, \alpha}$. If $x\geq \varepsilon$, then $Mx^\alpha\geq M\varepsilon^\alpha= 1$, hence obviously $P\mu([0,x])\leq Mx^\alpha$. If $x<\varepsilon$, then
$$
\begin{aligned}
P\mu([0,x])&=\sum_{i=1}^N p_i \mu([0, f_i^{-1}(x)])\leq \sum_{i=1}^N p_i \mu\bigg(\bigg[0, \frac{x}{\underline{\lambda}_i}\bigg]\bigg)
\leq \sum_{i=1}^N p_i M\bigg(\frac{x}{\underline{\lambda}_i}\bigg)^\alpha\\
&=Mx^\alpha \sum_{i=1}^N p_i \underline{\lambda}_i^{-\alpha}<Mx^\alpha(1-\delta)^\alpha\leq Mx^\alpha.
\end{aligned}
$$
Analogous computations for $P^+_{M, \alpha}$ complete this part of the proof.

We set ${\lambda}_{\bf i}^k:={\lambda}_{i_k}{\lambda}_{i_{k-1}}\cdots {\lambda}_{i_1}$ for arbitrary real numbers $\lambda_i$, $1\le i\le N$, and ${\bf i}=(i_1, i_2,\ldots)\in\Sigma$. Applying (\ref{expectation_lambdas}) we have
$$
\mathbb{E}\big( (\underline{\lambda}^{\lfloor \sqrt[4]{n} \rfloor}_{\bf i})^{-\alpha} \big)\leq (1-\delta)^{\alpha\lfloor \sqrt[4]{n} \rfloor}\quad\text{for any $n\in\mathbb{N}$.}
$$
By Chebyshev's inequality and (\ref{lambdas_estimation}) for $x\in[\varepsilon(1-\delta)^{\frac 1 2 \lfloor\sqrt[4]{n}\rfloor}, \varepsilon]$ we have
\begin{align*}
&\mathbb{P}\bigg(\bigcap_{j=1}^{\lfloor\sqrt[4]{n}\rfloor}A_{x, j}(\varepsilon)\bigg)=\mathbb{P}\bigg(\{{\bf i}\in\Sigma : \forall_{j\leq \lfloor\sqrt[4]{n}\rfloor} \ f_{\bf i}^j(x)\leq\varepsilon   \}\bigg)\\
&\leq\mathbb{P}\bigg(\{{\bf i}\in\Sigma : \forall_{j\leq \lfloor\sqrt[4]{n}\rfloor} \ f_{\bf i}^j \big(\varepsilon(1-\delta)^{\frac{1}{2}\lfloor \sqrt[4]{n} \rfloor}\big)\leq\varepsilon   \}\bigg)\\
&\leq\mathbb{P}\bigg(\{{\bf i}\in\Sigma : \forall_{j\leq \lfloor\sqrt[4]{n}\rfloor} \  \underline{\lambda}^{j}_{\bf i}\varepsilon(1-\delta)^{\frac{1}{2}\lfloor \sqrt[4]{n} \rfloor}\leq\varepsilon   \}\bigg)\\
&\leq\mathbb{P}\bigg(\{{\bf i}\in\Sigma : (1-\delta)^{\frac{1}{2}\lfloor\sqrt[4]{n}\rfloor}<\underline{\lambda}_{\bf i}^{-\lfloor\sqrt[4]{n}\rfloor}\}\bigg)
\leq (1-\delta)^{-\frac{\alpha}{2}\lfloor\sqrt[4]{n}\rfloor}\mathbb{E}\big(\underline{\lambda}_{\bf i}^{-\alpha\lfloor\sqrt[4]{n}\rfloor}\big)\\
&\leq (1-\delta)^{-\frac{\alpha}{2}\lfloor\sqrt[4]{n}\rfloor}(1-\delta)^{\alpha \lfloor\sqrt[4]{n}\rfloor}
\leq (1-\delta)^{\frac{\alpha}{2}\lfloor\sqrt[4]{n}\rfloor}\qquad\text{for $n\in\mathbb N$}.
\end{align*}
Estimate (\ref{probability_A_1}) may be proved in the same way.
\end{proof}


\begin{thr}\label{T1_23.06.19}
If $(f_1,\ldots, f_N; p_1,\ldots, p_N)$ is an admissible iterated function system, then the corresponding Markov operator $P$ has a unique invariant measure $\mu_*\in\mathcal M_1((0, 1))$. Moreover $\mu_*$ is atomless.
\end{thr}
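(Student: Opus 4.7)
The plan splits into three parts. For \emph{existence}, the set $\mathcal{P}_{M,\alpha}:=\mathcal{P}_{M,\alpha}^-\cap\mathcal{P}_{M,\alpha}^+$ is convex, $P$-invariant by Lemma \ref{constants}, nonempty (it contains $\delta_x$ for $x\in[\varepsilon,1-\varepsilon]$ by (\ref{delta_in_class})), and weak-$*$ closed in $\mathcal{M}_1([0,1])$, since the defining inequalities pass to weak-$*$ limits via a portmanteau/continuity-point approximation. For any $\mu_0\in\mathcal{P}_{M,\alpha}$ the Ces\`aro averages $\nu_n=\frac{1}{n}\sum_{k=0}^{n-1}P^k\mu_0$ remain in $\mathcal{P}_{M,\alpha}$, and any weak-$*$ cluster point $\mu_*$ is $P$-invariant (by the Feller property) and still in $\mathcal{P}_{M,\alpha}$. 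The bounds $\mu_*([0,x])\le Mx^\alpha$ and $\mu_*([1-x,1])\le Mx^\alpha$ then force $\mu_*(\{0\})=\mu_*(\{1\})=0$, so $\mu_*\in\mathcal{M}_1((0,1))$.

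For \emph{atomlessness}, suppose toward contradiction that $a:=\sup_{x\in[0,1]}\mu_*(\{x\})>0$; this supremum is attained on a finite nonempty set $A\subset(0,1)$ (using that $\{x:\mu_*(\{x\})\ge\epsilon\}$ is finite for every $\epsilon>0$, and that $\mu_*(\{0\})=\mu_*(\{1\})=0$). For $x\in A$, $P$-invariance gives
\[
a=\mu_*(\{x\})=\sum_{i=1}^N p_i\,\mu_*(\{f_i^{-1}(x)\}),
\]
where each summand is at most $p_i a$, so $\mu_*(\{f_i^{-1}(x)\})=a$ for every $i$. Hence $f_i^{-1}(A)\subseteq A$; finiteness of $A$ and injectivity of $f_i$ yield $f_i(A)=A$, and order-preservation then forces $f_i|_A=\mathrm{id}_A$. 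Any $x_0\in A\subset(0,1)$ would therefore be a common fixed point of all $f_i$, contradicting condition~(i) of admissibility.

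For \emph{uniqueness}, let $\mu_1,\mu_2\in\mathcal{M}_1((0,1))$ be invariant and introduce the synchronous coupling $(X_n,Y_n)$ with $X_0\sim\mu_1$, $Y_0\sim\mu_2$ independent, driven by common i.i.d.\ indices $(\xi_n)$ via $X_{n+1}=f_{\xi_{n+1}}(X_n)$ and $Y_{n+1}=f_{\xi_{n+1}}(Y_n)$. Order-preservation yields a monotone coupling, and invariance gives $X_n\sim\mu_1$, $Y_n\sim\mu_2$ for every $n$. It suffices to show $|X_n-Y_n|\to 0$ in probability, since for any Lipschitz $\varphi\in C([0,1])$,
\[
|\langle\mu_1,\varphi\rangle-\langle\mu_2,\varphi\rangle|=\bigl|\mathbb{E}(\varphi(X_n)-\varphi(Y_n))\bigr|\le\mathrm{Lip}(\varphi)\,\mathbb{E}|X_n-Y_n|\longrightarrow 0,
\]
forcing $\mu_1=\mu_2$. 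This is the main obstacle: the $f_i$ need not be globally contracting, so the gap $G_n=|X_n-Y_n|$ is not a supermartingale. The plan is to combine the escape estimates (\ref{probability_A_0})--(\ref{probability_A_1}), which force both trajectories to return to some compact $[a,b]\subset(0,1)$ infinitely often, with condition~(i), which provides a local crossing mechanism in the interior; each return to $[a,b]$ should supply a uniformly positive probability of a definite decrease of $G_n$, and a Borel--Cantelli argument then drives $G_n\to 0$ almost surely.
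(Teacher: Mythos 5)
\textbf{Existence} and \textbf{atomlessness} are fine. The existence part is essentially the paper's argument (Ces\`aro averages staying in the tight, $P$-invariant, weak-$*$ closed class $\mathcal{P}_{M,\alpha}$; a cluster point is invariant by Feller and charges neither $0$ nor $1$). Your atomlessness argument is a cleaner, more explicit version of the paper's: the paper asserts that $\{f_{\mathbf i}^{-1}(a):\mathbf i\in\Sigma_*\}$ is infinite and derives infinite mass, while you spell out \emph{why} that set cannot be finite — if it were, $f_i^{-1}$ would permute a finite set $A$, and an increasing bijection of a finite ordered set is the identity, contradicting admissibility condition (i). Both routes are correct; yours makes the paper's implicit step precise.

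\textbf{Uniqueness} is where the proposal has a genuine gap, and the gap is conceptual. You couple synchronously: $X_{n+1}=f_{\xi_{n+1}}(X_n)$, $Y_{n+1}=f_{\xi_{n+1}}(Y_n)$ with the \emph{same} random index $\xi_{n+1}$ driving both. Because each $f_i$ is order-preserving, this coupling preserves the order of the pair, so there is \emph{no} ``local crossing mechanism''; the two trajectories never cross. Moreover, the gap $|Y_{n+1}-X_{n+1}| = f_{\xi_{n+1}}(Y_n)-f_{\xi_{n+1}}(X_n)$ contracts only if the randomly chosen map happens to have slope $<1$ on $[X_n,Y_n]$, and admissibility condition (i) (existence of maps pushing up and down) says nothing about slopes in the interior of $(0,1)$. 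Returns to a compact $[a,b]$ plus condition (i) therefore do \emph{not} yield ``a uniformly positive probability of a definite decrease of $G_n$'' — to obtain any contraction of the synchronous coupling you would need something like the paper's Theorem~\ref{malicet} (Malicet's local contraction), which is a nontrivial result the proposal does not invoke. As written, the Borel--Cantelli plan has no mechanism behind it.

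The paper's uniqueness argument sidesteps all of this and is worth contrasting with your plan: it does \emph{not} prove $|X_n-Y_n|\to 0$ (that is the content of the later stability theorem). Instead it works directly with ergodic decompositions. If $\mu_1,\mu_2$ were distinct ergodic invariant measures on $(0,1)$, pick $\xi$ with $\mu_2((0,\xi))>\mu_1((0,\xi))$. Birkhoff's ergodic theorem provides a $\mu_1$-generic point $x_1$ and, since $1\in\operatorname{supp}\mu_2$, a $\mu_2$-generic point $x_2>x_1$. Monotonicity of the $f_i$'s gives, pointwise in $\mathbf{i}$,
\begin{displaymath}
\frac{\#\{k\le n: f^k_{\mathbf i}(x_1)\in(0,\xi)\}}{n}\;\ge\;\frac{\#\{k\le n: f^k_{\mathbf i}(x_2)\in(0,\xi)\}}{n},
\end{displaymath}
so passing to the limit yields $\mu_1((0,\xi))\ge\mu_2((0,\xi))$, a contradiction. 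This exploits monotonicity through visit frequencies rather than through a gap process, and it needs no contraction estimate at all. If you want to salvage the coupling route you would first have to import Malicet's contraction theorem (or prove the asymptotic stability of Theorem~\ref{T2_23.06.19} first), which inverts the paper's logical order and costs considerably more than the ergodic-theorem argument.
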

\begin{proof} {\bf Existence.} Let $\varepsilon, \alpha, \delta, M$ be the positive constants given in Lemma \ref{constants}. Then $\delta_{\frac{1}{2}}\in\mathcal{P}_{M, \alpha}$ and, by Lemma \ref{constants},

$$\mu_n:=\frac{1}{n}(\delta_{\frac{1}{2}}+P\delta_{\frac{1}{2}} +\cdots+P^{n-1}\delta_{\frac{1}{2}})\in \mathcal{P}_{M, \alpha} \ \textrm{for $n\in\mathbb N$.} $$
Since the family of measures $\mathcal{P}_{M, \alpha}$ is tight, there exists an accumulation point $\mu\in\mathcal M_1([0, 1])$ of the sequence $(\mu_n)$ in the weak-$*$ topology. It is easy to check that $\mu\in\mathcal{P}_{M, \alpha}$, which obviously implies that $\mu(\{0,1\})=0$. Moreover, the operator $P$ is a Feller operator and henceforth the measure $\mu$ is invariant for $P$. This completes the proof of existence.

{\bf Uniqueness.} Since for any $x\in (0,1)$ there exist $i, j\in \{1,\ldots, N\}$ such that $f_i(x)<x<f_j(x)$, $0$ and $1$ belong to the support of every invariant measure $\mu\in\mathcal M_1((0, 1))$.

Assume, contrary to our claim, that there exist at least two different ergodic invariant measures $\mu_1, \mu_2\in\mathcal M_1((0, 1))$. Then we may choose $\xi\in (0,1)$ such that, say, $\mu_2((0,\xi))>\mu_1((0,\xi))$. Since $\mu_1$ is ergodic, by the Birkhoff Ergodic Theorem we may choose  $x_1\in(0,1)$ such that 
$$
\lim_{n\to\infty} \frac{\#\{1\leq k\leq n: f^k_{\bf i}(x_1)\in (0,\xi)\}}{n}=\mu_1\big((0, \xi)\big) \ \textrm{for $\mathbb{P}$-a.e. ${\bf i}\in\Sigma$}.
$$
On the other hand, since $\mu_2((x_1, 1))>0$ ($1$ belongs to the support of $\mu_2$), we can also choose $x_2\in (x_1, 1)$ such that
$$
\lim_{n\to\infty} \frac{\#\{1\leq k\leq n: f^k_{\bf i}(x_2)\in (0,\xi)\}}{n}=\mu_2\big((0,\xi)\big) \ \textrm{for $\mathbb{P}$-a.e. ${\bf i}\in\Sigma$},
$$
again by the Birkhoff Ergodic Theorem. But the functions $f_1,\ldots, f_N$ are increasing, $x_1<x_2$ and therefore
$$ \frac{\#\{1\leq k\leq n: f^k_{\bf i}(x_1)\in (0,\xi)\}}{n}\geq \frac{\#\{1\leq k\leq n: f^k_{\bf i}(x_2)\in (0,\xi)\}}{n}\quad\text{for all ${\bf i}\in\Sigma$.}
$$
From this we conclude that $\mu_1((0,\xi))\geq \mu_2((0,\xi))$, which contradicts our assumption. 

Finally assume, contrary to our claim, that $\mu_*$ has an atom. Let $a\in (0, 1)$ be such that $\mu_*(\{a\})=\sup_{x\in (0, 1)}\mu_*(\{x\})>0$. Since
$$
\mu_*(\{a\})=\sum_{i=1}^N p_i\mu_*(\{f_i^{-1}(a)\}),
$$
we obtain that $\mu_*(\{f_i^{-1}(a)\})=\mu_*(\{a\})$ for all $i=1,\ldots, N$ and consequently $\mu_*(\{a\})=\mu_*(\{f_{\bf i}^{-1}(a)\})$ for any ${\bf i}\in\Sigma_*$, which is impossible. Indeed, the set $\{f_{\bf i}^{-1}(a): {\bf i}\in\Sigma_*\}$ for admissible iterated function systems  is infinite. This, in turn,  would imply that $\mu_*((0, 1))=\infty$. The proof is complete.
\end{proof}

\begin{thr}\label{T2_23.06.19}
Let $(f_1,\ldots f_N; p_1,\ldots,p_N)$ be an admissible iterated function system and let  $P$ be the corresponding Markov operator. Let $\mu_*\in\mathcal M_1((0, 1))$ be its unique invariant measure. Then for any measure $\mu\in\mathcal M_1((0, 1))$ we have
$$
\lim_{n\to\infty}\langle P^n\mu, \varphi\rangle=\langle \mu_*, \varphi\rangle\qquad\text{for $\varphi\in C([0, 1])$.}
$$
\end{thr}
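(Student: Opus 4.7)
The plan is to first establish that the orbit $(P^n\mu)$ is tight on $(0,1)$, next identify every weak-$*$ limit point of the Cesaro averages with $\mu_*$ via Theorem \ref{T1_23.06.19}, and finally upgrade Cesaro convergence to ordinary convergence by exploiting the monotone structure of the $f_i$. For the tightness, let $\varepsilon,\alpha,\delta,M$ be the constants supplied by Lemma \ref{constants} and fix $\eta>0$. Since $\mu(\{0,1\})=0$, I would pick $\xi_0\in(0,\varepsilon)$ with $\mu([0,\xi_0]\cup[1-\xi_0,1])<\eta/3$, and decompose $\mu$ into its restrictions to $[0,\xi_0)$, $[\xi_0,\varepsilon]$, $[\varepsilon,1-\varepsilon]$, $(1-\varepsilon,1-\xi_0]$, $(1-\xi_0,1]$. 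The middle piece, once normalised, lies in $\mathcal{P}_{M,\alpha}$ and so stays in this class under every iterate by Lemma \ref{constants}, placing mass at most $2M\xi^\alpha$ on $[0,\xi]\cup[1-\xi,1]$. The two inner edge pieces on $[\xi_0,\varepsilon]$ and $[1-\varepsilon,1-\xi_0]$ are handled by the escape estimates \eqref{probability_A_0}--\eqref{probability_A_1}: for $n$ large enough that $\varepsilon(1-\delta)^{\lfloor\sqrt[4]{n}\rfloor/2}<\xi_0$, all but an exponentially small fraction of the mass starting in these strips has left $[0,\varepsilon]$ (resp.\ $[1-\varepsilon,1]$) by time $\lfloor\sqrt[4]{n}\rfloor$, and the escaped portion can be re-absorbed into the middle regime. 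The outermost pieces, of total mass $<\eta/3$, are controlled trivially. This yields $\xi\in(0,1/2)$ and $N\in\mathbb N$ with $P^n\mu([0,\xi]\cup[1-\xi,1])\le\eta$ for $n\ge N$.

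Tightness then passes to the Cesaro averages $\nu_n:=\frac{1}{n}\sum_{k=0}^{n-1}P^k\mu$, so their weak-$*$ subsequential limits lie in $\mathcal{M}_1((0,1))$. The identity $P\nu_n-\nu_n=\frac{1}{n}(P^n\mu-\mu)\to 0$ in the weak-$*$ topology, combined with the Feller property of $P$, forces every such limit to be $P$-invariant, and Theorem \ref{T1_23.06.19} identifies it with $\mu_*$. Hence $\nu_n\to\mu_*$ weakly for every $\mu\in\mathcal{M}_1((0,1))$.

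The last step, upgrading Cesaro to ordinary convergence, is the main obstacle; in general this implication fails, as an irrational rotation on the circle shows. Using $\langle P^n\mu,\varphi\rangle=\int_{(0,1)}U^n\varphi\,d\mu$ and dominated convergence, the claim reduces to the pointwise statement $U^n\varphi(x)\to\langle\mu_*,\varphi\rangle$ for each $x\in(0,1)$ and each Lipschitz $\varphi$. Coupling the chains started at $x<y$ by the same $\mathbf{i}\in\Sigma$, the monotonicity of the $f_i$ yields $|U^n\varphi(x)-U^n\varphi(y)|\le\mathrm{Lip}(\varphi)\,\mathbb{E}|f_{\mathbf{i}}^n(x)-f_{\mathbf{i}}^n(y)|$, and $P$-invariance of $\mu_*$ rewrites this as
\[
|U^n\varphi(x)-\langle\mu_*,\varphi\rangle|\le\mathrm{Lip}(\varphi)\int_{(0,1)}\mathbb{E}|f_{\mathbf{i}}^n(x)-f_{\mathbf{i}}^n(y)|\,d\mu_*(y).
\]
The crux is an interior $L^1$--contraction: $\mathbb{E}|f_{\mathbf{i}}^n(x)-f_{\mathbf{i}}^n(y)|\to 0$ for all $x,y\in(0,1)$. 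This is the genuinely hard point; I would attack it through the mean-value representation $\log(f_{\mathbf{i}}^n(y)-f_{\mathbf{i}}^n(x))=\log(y-x)+\sum_{k=1}^n\log f_{i_k}'(\xi_k)$ and show that the right-hand sum tends to $-\infty$ almost surely, using the tightness of $(P^n\mu)$ in $\mathcal{P}_{M,\alpha}$ from the first step (which confines the comparison points $\xi_k$ to $[\varepsilon,1-\varepsilon]$ for most $k$, where $\log f_i'$ is uniformly bounded), the ergodicity of $\mu_*$ from Theorem \ref{T1_23.06.19}, and the positivity of the endpoint Lyapunov exponents built into admissibility.
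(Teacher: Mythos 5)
Your reduction of the theorem to showing $\mathbb{E}\,|f^n_{\mathbf i}(x)-f^n_{\mathbf i}(y)|\to 0$ for $x,y\in(0,1)$ matches the paper's key step, and your first two stages (tightness of $(P^n\mu)$ and Cesaro convergence) are plausible but unnecessary -- the paper goes straight to the pointwise contraction. The crux, however, is where your argument breaks down, and it does so for a structural reason. The mean-value representation
$\log\bigl(f^n_{\mathbf i}(y)-f^n_{\mathbf i}(x)\bigr)=\log(y-x)+\sum_{k=1}^n\log f_{i_k}'(\xi_k)$
evaluates $f_{i_k}'$ at interior points $\xi_k$, but membership in $H^+$ only requires the $f_i$ to be $C^1$ in a neighbourhood of $0$ and $1$; in the interior they are merely homeomorphisms and need not be differentiable anywhere. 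Your claim that ``$\log f_i'$ is uniformly bounded'' on $[\varepsilon,1-\varepsilon]$ is therefore vacuous. Moreover, even if the maps were $C^1$ everywhere, nothing in the admissibility hypothesis controls the interior Lyapunov exponent -- positivity is assumed only at the endpoints. That the exponent with respect to $\mu_*$ is non-positive (and that one can still contract when it is zero) is a deep fact resting on the Avila--Viana invariance principle, which underlies Malicet's Theorem~\ref{malicet}; and appealing to Birkhoff along the orbit $\xi_k$ is circular, since equidistribution of that orbit under $\mu_*$ is essentially what is being proved.

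The paper sidesteps all of this with a derivative-free argument. It introduces the Furstenberg martingale $\xi^\varphi_n(\mathbf i)=\langle\mu_*,\varphi\circ f_{(i_1,\ldots,i_n)}\rangle$; martingale convergence plus a compactness and shrinking-interval argument shows that the limit measures $\mu_{\mathbf i}=\lim_n \mu_*\circ f^{-1}_{(i_1,\ldots,i_n)}$ are Dirac masses $\delta_{\upsilon(\mathbf i)}$ for $\mathbb{P}$-a.e.\ $\mathbf i$. Because $\mu_*((0,x))>0$ and $\mu_*((y,1))>0$, one can choose points $u_n<x$ and $v_n>y$ that are mapped into an $\varepsilon$-neighbourhood of $\upsilon(\mathbf i)$, and monotonicity of the $f_i$ then sandwiches $f_{(i_1,\ldots,i_n)}(x)$ and $f_{(i_1,\ldots,i_n)}(y)$ between them, giving $|f_{(i_1,\ldots,i_n)}(x)-f_{(i_1,\ldots,i_n)}(y)|\to 0$ a.s.; dominated convergence then finishes the proof. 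You should replace your Lyapunov-exponent crux with an argument of this kind (or invoke Malicet's theorem directly, which the paper cites for precisely this purpose but chooses not to use here).
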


\begin{proof} We follow \cite{Furstenberg} in defining some martingale. Namely, for $\varphi\in C([0, 1])$ we consider the sequence of random variables   
$(\xi_n^{\varphi})$ defined on the probability space $(\Sigma, \mathbb P)$ by the formula
$$
\xi_n^{\varphi}({\bf i})=\langle \mu_*, \varphi \circ f_{(i_1,\ldots, i_n)}\rangle \quad\text{for ${\bf i}=(i_1, i_2,\ldots)$}.
$$
Since $\mu_*$ is an invariant measure for $P$, we easily check that $(\xi_n^{\varphi})$ is a bounded martingale and from the Martingale Convergence Theorem it follows that $(\xi_n^{\varphi})$ is convergent $\mathbb P$-a.s. Since the space $C([0, 1])$ is separable, there exists a subset $\Sigma_0\subset\Sigma$  with $\mathbb P(\Sigma_0)=1$ such that $(\xi_n^{\varphi}({\bf i}))$ is convergent for any $\varphi\in C([0, 1])$ and ${\bf i}\in\Sigma_0$. By the Riesz Representation Theorem for any ${\bf i}\in\Sigma_0$ there exists a measure $\mu_{\bf i}\in\mathcal M_1([0, 1])$ such that
\begin{equation}\label{e1_12.08.19}
\lim_{n\to\infty}\xi_n^{\varphi}({\bf i})= \langle \mu_{\bf i}, \varphi\rangle\qquad\text{for every $\varphi\in C([0, 1]$}.
\end{equation}

Now we are going to show that $\mu_{\bf i}$ is supported at some point $\upsilon({\bf i})\in [0, 1]$ for $\mathbb P$-a.e. ${\bf i}\in\Sigma$. To do this it is enough to show that for any $\varepsilon>0$ there exists $\Sigma_{\varepsilon}\subset\Sigma_0$ with $\mathbb P(\Sigma_{\varepsilon})=1$ satisfying the following property:  for every ${\bf i}\in\Sigma_{\varepsilon}$ there exists an interval $I$ of length $|I|\le\varepsilon$ such that $\mu_{\bf i}(I)\ge 1-\varepsilon$. Hence we obtain that $\mu_{\bf i}=\delta_{\upsilon({\bf i})}$ for all ${\bf i}$ from the set $\tilde\Sigma_0=\bigcap_{n=1}^{\infty}\Sigma_{1/n}$. Obviously $\mathbb P(\tilde\Sigma_0)=1$.

Fix $\varepsilon>0$ and let $a, b\in (0, 1)$ be such that $\mu_* ([a, b])>1-\varepsilon$. Let $l\in\mathbb N$ be such that $1/l<\varepsilon/2$. Since for any $x\in (0, 1)$ there exists $i\in\{1,\ldots, k\}$ such that $f_i(x)<x$, we may find a sequence $({\bf j}_n)$, ${\bf j}_n\in\Sigma_*$, such that $f_{{\bf j}_n}(b)\to 0$ as $n\to\infty$. Therefore, there exist ${\bf i}_1,\ldots, {\bf i}_l$ such that $f_{{\bf i}_m}([a, b])\cap f_{{\bf i}_n}([a, b])=\emptyset$ for $m, n\in\{1,\ldots, l\}$, $m\neq n$.
Put $n^*=\max_{m\le l}|{\bf i}_m|$ and set $J_m=f_{{\bf i}_m}([a, b])$ for $m\in\{1,\ldots, l\}$.
Now observe that for any sequence ${\bf u}=(u_1,\ldots, u_n)\in \Sigma_*$ there exists $m\in\{1,\ldots, l\}$ such that $|f_{\bf u}(J_m)|<1/l<\varepsilon/2$. 
This shows that for any cylinder  in $\Sigma$, defined by fixing the first initial $n$ entries $(u_1,\ldots, u_{n})$, the conditional probability that  $(u_1,\ldots, u_n,\ldots, u_{n+k})$ are such that
$|f_{(u_1, \ldots, {u_n},\ldots, u_{n+k})}([a, b])|\ge\varepsilon$ for all $k=1,\ldots, n^*$ is less than $1-q$ for some $q>0$. Hence there exists $\Sigma_{\varepsilon}\subset\Sigma$ with $\mathbb P(\Sigma_{\varepsilon})=1$ such that for all $(u_1, u_2,\ldots)\in\Sigma_{\varepsilon}$ we have $|f_{(u_1,\ldots, u_n)}([a, b])|<\varepsilon/2$ for infinitely many $n$. Since $[0, 1]$ is compact, we may additionally assume that for infinitely many $n$'s the set $f_{(u_1,\ldots, u_n)}([a, b])$ is contained in some set $I$ with $|I|\le\varepsilon$.
Since $\mu_*$ is an invariant probability measure and $a, b\in (0, 1)$ are chosen in such a way that $\mu_*([a, b])>1-\varepsilon$, we obtain that $\mu_{\bf i}(I)\ge 1-\varepsilon$. 
The proof of our assertion that $\mu_{\bf i}$ is supported at some point $\upsilon({\bf i})\in [0, 1]$ for $\mathbb P$-a.e. ${\bf i}\in\Sigma$ is finished. 

To show that the sequence $(P^n\mu)$ for $\mu\in\mathcal M_1((0, 1))$ converges weakly to $\mu_*$ it is enough to prove, since the Lipschitz functions are dense in $C([0, 1])$, that for any Lipschitz function $\varphi$ and arbitrary two points $x, y\in (0, 1)$ we have
$$
\lim_{n\to\infty}\left|\langle P^n\delta_x, \varphi\rangle-\langle P^n\delta_y, \varphi\rangle\right|=0.
$$
In fact, we would obtain then that 
$$
\begin{aligned}
&\left|\langle P^n\mu, \varphi\rangle- |\langle \mu_*, \varphi\rangle\right|\\
&\le\int_{(0, 1)}\int_{(0, 1)}\left|\langle P^n\delta_x, \varphi\rangle-
\langle P^n\delta_y, \varphi\rangle\right|\mu(\d x)\mu_*(\d y)\to 0\quad\text{as $n\to\infty$}.
\end{aligned}
$$

Fix $x, y\in (0, 1)$ and let $x<y$. Fix $\varepsilon>0$. Since $\mu_*$ is invariant, by the proof of uniqueness in Theorem \ref{T1_23.06.19}, $0$ and $1$ belong to its support and consequently 
$\mu_* ((0, x))>0$ and $\mu_* ((y, 1))>0$. We know by (\ref{e1_12.08.19})
that for $\mathbb P$ almost every ${\bf i}=(i_1, i_2,\ldots)\in\Sigma$ the measures $\mu_*\circ f_{(i_1, i_2,\ldots, i_n)}^{-1}$, $n\in\mathbb N$, converge weakly to $\delta_{\upsilon({\bf i})}$. Consequently, for every $\varepsilon>0$, $\mu_* \circ f_{(i_1, i_2,\ldots, i_n)}^{-1}((\upsilon({\bf i})-\varepsilon/2, \upsilon({\bf i})+\varepsilon/2)\cap (0, 1))\to 1$ as $n\to\infty$. Since $\mu_*((0, x))>0$ and $\mu_*((y, 1))>0$, there exist $u_n\in (0, x)$ and $v_n\in (y, 1)$ such that $u_n, v_n\in f_{(i_1,\ldots, i_n)}^{-1}((\upsilon({\bf i})-\varepsilon/2, \upsilon({\bf i})+\varepsilon/2)\cap (0, 1))$ for all $n$ sufficiently large. Hence $f_{(i_1, \ldots, i_n)}(u_n),  f_{(i_1,\ldots, i_n)}(v_n)\in (\upsilon({\bf i})-\varepsilon/2, \upsilon({\bf i})+\varepsilon/2)$ and consequently $f_{(i_1,\ldots, i_n)}(x),  f_{(i_1,\ldots, i_n)}(y)\in (\upsilon({\bf i})-\varepsilon/2, \upsilon({\bf i})+\varepsilon/2)$ for all $n$ sufficiently large. Since $\varepsilon>0$ was arbitrary, we obtain that for $\mathbb P$--a.e. ${\bf i}=(i_1, i_2,\ldots)\in\Sigma$ the following convergence holds:
$$
\lim_{n\to\infty} |f_{(i_1,\ldots, i_n)}(x)-f_{(i_1,\ldots, i_n)}(y)|=0.
$$
By (\ref{e2_12.09.19}) and the fact that $\langle P^n\delta_z, \varphi\rangle=U^n\varphi(z)$ for $z\in [0, 1]$ we have
\begin{equation}\label{e3_12.08.19}
\begin{aligned}
&\left|\langle P^n\delta_x, \varphi\rangle-
\langle P^n\delta_y, \varphi\rangle\right|=|U^n\varphi(x)-U^n\varphi(y)|\\
&\le L\sum_{(i_1,\ldots, i_n)\in\Sigma_n}|f_{(i_1,\ldots, i_n)}(x)-f_{(i_1,\ldots, i_n)}(y)|p_{i_1}\cdots p_{i_n}\quad\text{for $x, y\in (0, 1)$},
\end{aligned}
\end{equation}
where $L$ is the Lipschitz constant of $\varphi$. We are going to show that the right hand side of the above inequality converges to $0$ as $n\to\infty$. To do this for ${\mathbf i}=(i_1,\ldots, i_n,\ldots)$ put $g_n({\mathbf i}):=
|f_{\mathbf i_n}(x)-f_{\mathbf i_n}(y)|$, where ${\mathbf i_n}=(i_1,\ldots, i_n)$. Then $g_n({\mathbf i})\to 0$ as $n\to\infty$ for $\mathbb P$ almost every ${\mathbf i}\in\Sigma$. By the construction of the probability measures $\mathbb P$ and $\mathbb P_n$, $n\in\mathbb N$, we have
$$
\mathbb P(B_n\times\Sigma_1\times\Sigma_1\times\cdots)=\mathbb P_n(B_n)\quad\text{for $B_n\in\Sigma_n$}.
$$
Since $g_n({\mathbf i})$ depends only on the first $n$ coordinates, we have
$$
\int_{\Sigma}g_n({\mathbf i})\d \mathbb P({\mathbf i})=\int_{\Sigma_n}g_n({\mathbf i})\d \mathbb P_n({\mathbf i})=\sum_{(i_1,\ldots, i_n)\in\Sigma_n}|f_{(i_1,\ldots, i_n)}(x)-f_{(i_1,\ldots, i_n)}(y)|p_{i_1}\cdots p_{i_n}.
$$
Therefore, by the Lebesgue theorem we obtain
$$
\lim_{n\to\infty}\sum_{(i_1,\ldots, i_n)\in\Sigma_n}|f_{(i_1,\ldots, i_n)}(x)-f_{(i_1,\ldots, i_n)}(y)|p_{i_1}\cdots p_{i_n}=0
$$
and inequality (\ref{e3_12.08.19}) completes the proof.
\end{proof}

\begin{rem} From uniqueness in Theorem \ref{T1_23.06.19} and Theorem 1.3 in \cite{Krengel} it follows that for any $f\in C([0, 1])$ satisfying $f(0)=f(1)=0$ and $\langle \mu_*, f\rangle=0$ we have
$$
\left\|\frac{1}{n}\sum_{k=1}^n U^k f\right\|\to 0\qquad\text{as $n\to\infty$.}
$$
On the other hand, the asymptotic stability in Theorem \ref{T2_23.06.19} raises the question whether we have then $\|U^n f\|\to 0$ as $n\to\infty$. Unfortunately, we have not been able to answer the question, neither affirmatively nor negatively. 
\end{rem}

\begin{cor} Under the assumptions of Theorem \ref{T2_23.06.19} we have:
\begin{itemize}
\item[(i)] for every $f\in C([0, 1])$ we have the convergence of $(U^nf(x))$ for every $x\in [0, 1]$,
\item[(ii)] for every $f\in L_2(\mu_*)$ we have $\|U^nf-\int_{[0, 1]}f\d\mu_*\|_2\to 0$ as $n\to\infty$.
\end{itemize}
\end{cor}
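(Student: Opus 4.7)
The plan is to derive (i) directly from Theorem \ref{T2_23.06.19} after separately handling the endpoints $x=0,1$, and then to deduce (ii) from (i) via dominated convergence combined with an $L_2$ density argument.

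For (i), I would first observe that every $f_i\in H^+$ fixes the endpoints, so $f_i(0)=0$ and $f_i(1)=1$. Formula (\ref{e2_12.09.19}) then yields $U^nf(0)=f(0)$ and $U^nf(1)=f(1)$ for every $n$, so convergence at the endpoints is trivial. For $x\in(0,1)$, applying Theorem \ref{T2_23.06.19} with initial distribution $\mu=\delta_x\in\mathcal{M}_1((0,1))$ immediately gives $U^nf(x)=\langle P^n\delta_x,f\rangle\to\langle\mu_*,f\rangle$; this also identifies the limit of $(U^nf(x))$ as $\langle\mu_*,f\rangle$ throughout $(0,1)$.

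For (ii), the first step is to verify that $U$ extends to a contraction on $L_2(\mu_*)$. Jensen's inequality applied to the probability kernel $\pi(x,\cdot)$ gives $(U\varphi)^2\le U(\varphi^2)$ pointwise for bounded Borel $\varphi$, and integrating against the invariant measure $\mu_*$ yields $\|U\varphi\|_{L_2(\mu_*)}\le\|\varphi\|_{L_2(\mu_*)}$; this estimate extends to all of $L_2(\mu_*)$ by density of $C([0,1])$. Next, for $f\in C([0,1])$, (i) together with $\mu_*(\{0,1\})=0$ (built into $\mu_*\in\mathcal{M}_1((0,1))$) shows $U^nf\to\langle\mu_*,f\rangle$ pointwise $\mu_*$-almost everywhere; the uniform bound $|U^nf|\le\|f\|$ together with dominated convergence then upgrades this to convergence in $L_2(\mu_*)$.

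Finally, for arbitrary $f\in L_2(\mu_*)$ I would run a standard three-epsilon argument: given $\varepsilon>0$, pick $g\in C([0,1])$ with $\|f-g\|_{L_2(\mu_*)}<\varepsilon$ and split
\bes
\|U^nf-\langle\mu_*,f\rangle\|_{L_2(\mu_*)}\le\|U^n(f-g)\|_{L_2(\mu_*)}+\|U^ng-\langle\mu_*,g\rangle\|_{L_2(\mu_*)}+|\langle\mu_*,g-f\rangle|,
\ees
where the contraction property bounds the first term by $\varepsilon$, Cauchy--Schwarz bounds the third by $\varepsilon$, and the middle term tends to $0$ by the continuous case. I do not anticipate a serious obstacle here; the only subtle point worth emphasizing is the role of $\mu_*(\{0,1\})=0$, which converts the pointwise convergence of (i) into $\mu_*$-almost everywhere convergence and thereby prevents the trivial invariant measures $\delta_0,\delta_1$ from interfering.
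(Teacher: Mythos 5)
Your proof is correct and follows essentially the same route as the paper: part (i) by applying Theorem \ref{T2_23.06.19} to $\delta_x$ for interior $x$ together with the trivial fixed-point observation at the endpoints, and part (ii) by combining $\mu_*(\{0,1\})=0$, dominated convergence for continuous $f$, and the contraction property of $U$ on $L_2(\mu_*)$ with a density argument. The only difference is that you spell out the Jensen-inequality proof that $U$ is an $L_2(\mu_*)$-contraction and the three-epsilon density step, which the paper leaves implicit.
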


\begin{proof} (i) For  $x\in (0, 1)$ we have $U^n f(x)\to\int_{[0, 1]}f\d\mu_*$ by applying Theorem \ref{T2_23.06.19} to $\delta_x$. Since $U^n f(0)=f(0)$ and $U^n f(1)=f(1)$, we have the required convergence (not uniform since the limit may not be continuous).

(ii) Since $\mu_*(\{0, 1\})=0$, we have $\|U^nf-\int_{[0, 1]}f\d\mu_*\|_2\to 0$ as $n\to\infty$ for every $f\in C([0, 1])$, by (i). Since $U$ is a contraction of 
$L_2(\mu_*)$, this implies (ii).
\end{proof}

\section{Auxiliary results} For abbreviation we set, for $\varepsilon$ and $\delta$ of Lemma 1,
$$\varepsilon_n:=(1-\delta)^{\frac{1}{2}\lfloor\sqrt[4]{n}\rfloor}\quad\text{and}\quad
\gamma_n:=(1-\delta)^{\frac{\alpha}{2}\lfloor\sqrt[4]{n}\rfloor}\quad\text{for $n\in\mathbb N$.}
$$


\begin{lem}
\label{mass_boundary}
Let $(f_1,\ldots, f_N; p_1,\ldots, p_N)$ be an admissible iterated function system and let $\alpha, \delta, \varepsilon, M$ be the positive constants given in Lemma \ref{constants}. Then for all positive integers $n$ and $k\geq  \lfloor\sqrt[4]{n}\rfloor$ we have
$$
\mathbb{P}\big(\{{\bf i}\in\Sigma : f^{k}_{\bf i}(x)<\varepsilon_n\}\big)\leq 2M\gamma_n\quad\text{for $x\in [\varepsilon_n, 1]$}
$$
and
$$
\mathbb{P}\big(\{{\bf i}\in\Sigma : f^{k}_{\bf i}(y)>1-\varepsilon_n\}\big)\leq 2M\gamma_n\quad\text{for $y\in[0,1-\varepsilon_n]$.}
$$
\end{lem}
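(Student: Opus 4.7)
The plan is to split the argument by how far $x$ already is from $0$, using Lemma~\ref{constants} in two different ways, and then invoke the strong Markov property to glue the two cases together. We treat the first inequality; the second is the mirror-image argument with $\mathcal P^{+}_{M,\alpha}$ and (\ref{probability_A_1}).

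First, I would dispose of the easy range $x\in[\varepsilon,1]$. Here (\ref{delta_in_class}) gives $\delta_x\in\mathcal P^{-}_{M,\alpha}$, and the $P$-invariance of $\mathcal P^{-}_{M,\alpha}$ from Lemma~\ref{constants} iterated $k$ times yields $P^{k}\delta_x\in\mathcal P^{-}_{M,\alpha}$. Therefore
\bes
\mathbb{P}\big(f^{k}_{\bf i}(x)<\varepsilon_n\big)\le P^{k}\delta_x\big([0,\varepsilon_n]\big)\le M\varepsilon_n^{\alpha}=M\gamma_n,
\ees
since $\varepsilon_n^{\alpha}=(1-\delta)^{\frac{\alpha}{2}\lfloor\sqrt[4]{n}\rfloor}=\gamma_n$. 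This already beats the target bound.

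Now for the delicate range $\varepsilon_n\le x<\varepsilon$ (which is empty when $\varepsilon_n>\varepsilon$, in which case we are done). Let $j_n:=\lfloor\sqrt[4]{n}\rfloor$ and introduce the first exit time
\bes
\tau({\bf i}):=\min\{j\ge1:f^{j}_{\bf i}(x)\ge\varepsilon\},
\ees
a stopping time for the Markov chain $X_j=f^{j}_{\bf i}(x)$. I would split
\bes
\mathbb{P}\big(f^{k}_{\bf i}(x)<\varepsilon_n\big)=\mathbb{P}\big(f^{k}_{\bf i}(x)<\varepsilon_n,\tau>j_n\big)+\sum_{m=1}^{j_n}\mathbb{P}\big(f^{k}_{\bf i}(x)<\varepsilon_n,\tau=m\big).
\ees
The first summand is controlled by (\ref{probability_A_0}): because $\{\tau>j_n\}=\bigcap_{j=1}^{j_n}A_{x,j}(\varepsilon)$ and the range of (\ref{probability_A_0}) is $[\varepsilon\varepsilon_n,\varepsilon]\supset[\varepsilon_n,\varepsilon)$ (using $\varepsilon<1$), we obtain
\bes
\mathbb{P}\big(f^{k}_{\bf i}(x)<\varepsilon_n,\tau>j_n\big)\le\mathbb{P}(\tau>j_n)\le\gamma_n.
\ees
For each $m\le j_n\le k$, the strong Markov property gives
\bes
\mathbb{P}\big(f^{k}_{\bf i}(x)<\varepsilon_n,\tau=m\big)=\mathbb{E}\big(\mathbf{1}_{\{\tau=m\}}\,P^{k-m}\delta_{X_m}\big([0,\varepsilon_n]\big)\big);
\ees
on $\{\tau=m\}$ we have $X_m\ge\varepsilon$, so by the easy case the inner quantity is at most $M\gamma_n$ (and vanishes trivially if $k=m$). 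Summing over $m$ yields $M\gamma_n\,\mathbb{P}(\tau\le j_n)\le M\gamma_n$, and combining gives
\bes
\mathbb{P}\big(f^{k}_{\bf i}(x)<\varepsilon_n\big)\le\gamma_n+M\gamma_n\le 2M\gamma_n
\ees
because $M\ge1$.

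The main thing to watch is the bookkeeping of the ranges: one must verify that the hypothesis $x\in[\varepsilon\varepsilon_n,\varepsilon]$ needed in (\ref{probability_A_0}) is met by every $x$ in the relevant subrange $[\varepsilon_n,\varepsilon)$, and that the hypothesis $k\ge j_n$ is exactly what lets us apply the first-case estimate after the Markov restart at the stopping time $\tau\le j_n$. Everything else is a routine splitting argument. The proof of the second inequality is entirely analogous, replacing $A_{x,j}(\varepsilon)$ by $A^{x,j}(\varepsilon)$, $\mathcal P^{-}_{M,\alpha}$ by $\mathcal P^{+}_{M,\alpha}$, and (\ref{probability_A_0}) by (\ref{probability_A_1}).
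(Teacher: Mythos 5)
Your proof is correct and is essentially the paper's proof restated in stopping-time language: the paper's sets $E$ and $F$ are exactly your $\{\tau=m\}_{m\le j_n}$ and $\{\tau>j_n\}$, and both arguments split into the easy range $x\ge\varepsilon$, use (\ref{probability_A_0}) (valid on $[\varepsilon\varepsilon_n,\varepsilon]\supset[\varepsilon_n,\varepsilon)$) to bound the non-escape event by $\gamma_n$, and then apply $P$-invariance of $\mathcal P^{-}_{M,\alpha}$ after the first escape to $[\varepsilon,1]$.
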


\begin{proof}
Let $\alpha, \delta, \varepsilon, M$ be the constants given in Lemma \ref{constants}. Fix $n\in\mathbb N$ and $x\in  [\varepsilon_n, 1] $, and  let $k\geq  \lfloor\sqrt[4]{n}\rfloor$. If $x\geq\varepsilon$, then we have 
\begin{equation}
\label{inequality_mass_boundary}
\begin{aligned}
\mathbb{P}\big(\{{\bf i}\in\Sigma : f^{k}_{\bf i}(x)<\varepsilon_n\}\big)=
P^k\delta_x([0,\varepsilon_n))
\leq M\varepsilon^\alpha \gamma_n<2M\gamma_n,
\end{aligned}
\end{equation}
where the last inequality follows from the fact that $\delta_x\in \mathcal{P}_{M,\alpha}^-$ and $P(\mathcal{P}_{M,\alpha}^-)\subseteq \mathcal{P}_{M,\alpha}^-$, by Lemma \ref{constants}.

Assume now that $x\in [\varepsilon_n,\varepsilon)$. Set
$$
E:=\{(i_1,\ldots, i_m)\in\Sigma_*: m\leq \lfloor\sqrt[4]{n}\rfloor, \forall_{r< m} f_{(i_1,\ldots, i_r)}(x)< \varepsilon \ \textrm{and} \ f_{(i_1,\ldots, i_m)}(x)\geq \varepsilon\}$$
and
$$
F:=\{(i_1,\ldots, i_k)\in \Sigma_k:  \forall_{m\leq \lfloor\sqrt[4]{n}\rfloor} f_{(i_1,\ldots, i_m)}(x)< \varepsilon\}.
$$
Then we have
$$
P^k\delta_x=\sum_{(i_1,\ldots, i_m)\in E} p_{i_1}\cdots p_{i_m}P^{k-m}\delta_{f_{(i_1,\ldots, i_m)}(x)}+\sum_{(i_1,\ldots, i_k)\in F} p_{i_1}\cdots p_{i_k}\delta_{f_{(i_1,\ldots, i_k)}(x)}.$$
By the definition of $E$ we see that $\delta_{f_{(i_1,\ldots, i_m)}(x)}\in \mathcal{P}_{M,\alpha}^-$ for $(i_1,\ldots, i_m)\in E$ and from this also $P^{k-m}\delta_{f_{(i_1,\ldots, i_s)}(x)}\in \mathcal{P}_{M,\alpha}^-$ for $(i_1,\ldots, i_m)\in E$. On the other hand, from Lemma \ref{constants} it follows that
$$
\sum_{(i_1,\ldots, i_k)\in F} p_{i_1}\cdots p_{i_k}\leq \mathbb{P}\bigg(\bigcap_{j=1}^{\lfloor\sqrt[4]{n}\rfloor}A_{x, j}(\varepsilon)\bigg)\leq \gamma_n.
$$
Consequently, we have
$$
\begin{aligned}
&\mathbb{P}\big(\{{\bf i}\in\Sigma : f^{k}_{\bf i}(x)<\varepsilon_n\}\big)\
\leq\sum_{(i_1,\ldots, i_m)\in E} p_{i_1}\cdots p_{i_m}P^{k-m}\delta_{f_{(i_1,\ldots, i_m)}(x)}\big([0, \varepsilon_n]\big)+\gamma_n\\
&\leq M\varepsilon_n^\alpha\gamma_n+\gamma_n\leq 2M\gamma_n\quad\text{for $x\in [\varepsilon_n,\varepsilon)$.}
\end{aligned}
$$
This and condition (\ref{inequality_mass_boundary}) completes the estimate for $x\in [\varepsilon_n, 1]$. The estimate for 
$y\in[0,1-\varepsilon_n]$ is proved in the same way. The proof is complete.
\end{proof}


Observe that if $(f_1,\ldots, f_N; p_1,\ldots, p_N)$ is admissible, then there are no nontrivial closed subintervals of $(0,1)$ invariant by $\{f_1,\ldots, f_N\}$. We are now in a position to recall the contracting result obtained by D. Malicet in \cite{Malicet}. This theorem is crucial in the proof and in our setting may be stated as follows.

\begin{thr}[cf. Corollary 2.13 in \cite{Malicet}]
\label{malicet}
If $(f_1,\ldots, f_N; p_1,\ldots, p_N)$ is an admissible iterated function system, then there exists $q\in (0,1)$ such that for every $x\in (0,1)$ and $\mathbb{P}$-a.e. ${\bf i}\in\Sigma$ we have a neighbourhood $I$ of $x$ in $(0, 1)$ satisfying
\begin{equation}
\label{malicet2}
|f^n_{\bf i}(I)|\leq q^n\quad\text{ for every $n\in\mathbb{N}$.}
\end{equation}
\end{thr}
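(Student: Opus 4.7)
The plan is to upgrade the qualitative synchronization from Theorem~\ref{T2_23.06.19} to exponential contraction, using the boundary estimate of Lemma~\ref{mass_boundary} to control behaviour near $\{0,1\}$.

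First, I would observe that the martingale argument in the proof of Theorem~\ref{T2_23.06.19} gives, for $\mathbb P$-a.e.\ $\mathbf i\in\Sigma$, the weak convergence $(f_{\mathbf i|n})_*\mu_*\to\delta_{\upsilon(\mathbf i)}$, with $\upsilon(\mathbf i)\in(0,1)$ a.s.\ (the distribution of $\upsilon$ equals $\mu_*$, which is supported on $(0,1)$ by Theorem~\ref{T1_23.06.19}). Since $\mu_*$ is atomless and fully supported on $(0,1)$, and each $f_{\mathbf i|n}$ is a monotone homeomorphism, weak convergence translates into $|f_{\mathbf i|n}([\eta,1-\eta])|\to 0$ for every fixed $\eta\in(0,1/2)$ and a.e.\ $\mathbf i$. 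Qualitatively the statement is then clear: for any $x\in(0,1)$ the neighbourhood $I=(\eta,1-\eta)$ with $\eta$ small enough to contain $x$ has image shrinking to zero.

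Second, to promote this to an exponential rate I would invoke Kingman's subadditive ergodic theorem for the skew product $T(x,\mathbf i)=(f_{i_1}(x),\sigma\mathbf i)$ on $(0,1)\times\Sigma$ with invariant measure $\mu_*\otimes\mathbb P$; this measure is ergodic because $\mu_*$ is the unique $P$-invariant probability on $(0,1)$. A natural cocycle is the local expansion factor
$$
\Phi_n(x,\mathbf i):=\limsup_{\delta\downarrow 0}\log\frac{\bigl|f_{\mathbf i|n}\bigl((x-\delta,x+\delta)\bigr)\bigr|}{2\delta},
$$
which via the composition formula $f_{\mathbf i|(n+m)}=f_{\sigma^n\mathbf i|m}\circ f_{\mathbf i|n}$ obeys the subadditive cocycle inequality $\Phi_{n+m}\le\Phi_n+\Phi_m\circ T^n$. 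Kingman's theorem then produces a deterministic constant $\log q\in[-\infty,\infty)$ with $\Phi_n/n\to\log q$ a.e., and the qualitative contraction of the first step forces $\log q\le 0$.

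Third, the crux is the strict inequality $q<1$. A natural route from within the tools of this paper is a stopping-time argument built on Lemma~\ref{mass_boundary}: trajectories starting in $[\eta_0,1-\eta_0]$ cannot linger in the $\varepsilon_n$-neighbourhood of $\{0,1\}$ except with probability exponentially small in $\lfloor\sqrt[4]{n}\rfloor$, so there is always a definite contribution of interior contraction over each long block of steps. Malicet's own route is conceptually different: one invokes the Avila--Viana invariance principle to show that vanishing of the Lyapunov exponent would produce an invariant measurable disintegration of $\mu_*\otimes\mathbb P$ incompatible with the positive boundary Lyapunov exponents and the atomlessness of $\mu_*$. This strict-inequality step is the principal obstacle. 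Once it is in place, passing from the Lyapunov bound to the required stable-neighbourhood estimate $|f^n_{\mathbf i}(I)|\le q^n$ valid for \emph{every} $n\in\mathbb N$ (not just asymptotically) is a standard Pesin-type argument in which $I$ is shrunk to absorb the finitely many bad initial steps, and because the rate $q$ is deterministic any $x\in(0,1)$ is covered by choosing $I$ to be a sufficiently small $(\eta,1-\eta)\ni x$.
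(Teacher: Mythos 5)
The paper does not prove this statement at all; it is imported verbatim as Corollary~2.13 of Malicet's paper, whose proof rests on the Avila--Viana invariance principle. So there is no ``paper's own proof'' to compare against, and what you are offering is an attempt at a self-contained argument. As such, it has two genuine gaps, one technical and one structural.

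The technical gap is that your cocycle $\Phi_n(x,\mathbf i)$ is not well defined in this setting. An admissible system only requires the maps $f_i$ to be homeomorphisms that are $C^1$ in neighbourhoods of $0$ and $1$; on the interior of $(0,1)$ they can be arbitrary increasing homeomorphisms, with no derivative anywhere. Your limsup quantity is essentially $\log |f_{\mathbf i|n}'(x)|$, and there is no reason for the limit defining it to exist, be finite, be measurable, or be in $L^1(\mu_*\otimes\mathbb P)$ --- the integrability hypothesis of Kingman. This is exactly why Malicet's argument avoids derivatives and instead works directly with the lengths of images of intervals (a ``contraction rate'' defined from $|f_{\mathbf i|n}(I)|$, not from infinitesimal expansion), so that the subadditive cocycle is manifestly bounded. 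If you want to run Kingman, you need to replace $\Phi_n$ by something of that shape.

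The structural gap is the one you yourself flag: the whole content of the theorem is the strict inequality $q<1$, i.e.\ strict negativity of the (fibrewise) Lyapunov exponent. Your paragraph offers two candidates --- a stopping-time estimate patched together from Lemma~\ref{mass_boundary}, or ``invoke the Avila--Viana invariance principle'' --- but carries out neither. The first is not obviously viable: Lemma~\ref{mass_boundary} only controls how long a trajectory stays near $\{0,1\}$, not how much contraction accrues in the interior, and for generic homeomorphisms there need not be any pointwise contraction at any interior point. The second is precisely Malicet's proof and is in no way elementary or internal to this paper. Since the hard step is explicitly deferred, and the preliminary step uses an object that may not exist, the proposal as written does not constitute a proof. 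The final Pesin-type upgrade from an asymptotic rate to a bound for every $n\in\mathbb N$ is also only asserted; it is standard when there is a genuine exponential rate, but it still needs the earlier two steps to be in place.

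Everything in your first paragraph, by contrast, is sound and is indeed what Theorem~\ref{T2_23.06.19} already gives: a.e.\ pointwise synchronization $|f_{\mathbf i|n}(x)-f_{\mathbf i|n}(y)|\to 0$. The issue is that this is qualitative; the theorem you are trying to prove is quantitative, and the quantitative content is exactly where the proposal stops short.
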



\begin{lem}
\label{omega}
Let $(f_1,\ldots, f_N; p_1,\ldots, p_N)$ be an admissible iterated function system and let $a\in (0, \frac{1}{2})$. Then there exists $r\in\mathbb N$ and $\Omega\subseteq\Sigma$ with $\mathbb{P}(\Omega)>0$ such that for $J=[a,1-a]$ we have
$$
\sum_{n=1}^\infty \big| f_{\bf i}^n(J)\big| \leq r+\frac{q}{1-q}\qquad\text{for ${\bf i}\in\Omega$},
$$
where $q$ is the constant given in Theorem \ref{malicet}.
\end{lem}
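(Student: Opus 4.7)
The plan is to combine Malicet's contracting principle (Theorem \ref{malicet}) with a finite cover of $J$: I would cover the compact interval $J \subset (0, 1)$ by finitely many short intervals on which $f_{\bf i}^n$ contracts at rate $q^n$, uniformly on an event $\Omega$ of positive $\mathbb P$-measure, and then split the sum $\sum_n |f_{\bf i}^n(J)|$ into a short trivial part and a geometric tail.

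For ${\bf i}\in\Sigma$ and $x\in (0,1)$ I would introduce the \emph{contracting radius}
\[
\rho_{\bf i}(x) := \sup\{\eta > 0 : |f_{\bf i}^n([x-\eta, x+\eta]\cap [0,1])| \le q^n \text{ for every } n\in\mathbb N\}
\]
(with $\sup\emptyset=0$), where $q$ is the constant from Theorem \ref{malicet}. A short geometric check --- if $\delta<\rho_{\bf i}(x)$ and $|y-x|<\delta$ then $[y-(\delta-|y-x|), y+(\delta-|y-x|)]\subseteq [x-\delta, x+\delta]$, and the latter is contracting --- shows that $\rho_{\bf i}$ is $1$-Lipschitz in $x$. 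Together with Theorem \ref{malicet} this yields, for $\mathbb P$-a.e.\ ${\bf i}$, a continuous positive function $\rho_{\bf i}$ on $(0,1)$, hence $\rho_{\bf i}(J):=\inf_{x\in J}\rho_{\bf i}(x) > 0$ $\mathbb P$-a.s.\ by compactness of $J\subset(0,1)$. I would then pick $\eta_0 > 0$ small enough that $\Omega:=\{{\bf i}\in\Sigma : \rho_{\bf i}(J)\ge\eta_0\}$ has $\mathbb P(\Omega)>0$; this is possible because $\rho_{\bf i}(J) > 0$ $\mathbb P$-a.s.

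For every ${\bf i}\in\Omega$ I would cover $J$ by $k:=\lceil(1-2a)/(2\eta_0)\rceil+1$ intervals $[x_j-\eta_0, x_j+\eta_0]\cap[0,1]$ with $x_j\in J$. By the definition of $\rho_{\bf i}$ each of them satisfies $|f_{\bf i}^n(\cdot)|\le q^n$, and since $f_{\bf i}^n$ is monotone the images cover $f_{\bf i}^n(J)$, giving $|f_{\bf i}^n(J)|\le kq^n$. Picking $r\in\mathbb N$ with $kq^r\le 1$ (for instance $r:=\lceil\log k/\log(1/q)\rceil$), the split
\[
\sum_{n=1}^{\infty} |f_{\bf i}^n(J)| \le \sum_{n=1}^{r} 1 + \sum_{n=r+1}^{\infty} k q^n \le r + \frac{q}{1-q}
\]
finishes the argument on $\Omega$, using $|f_{\bf i}^n(J)|\le 1$ on the first piece and the geometric bound on the tail.

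The main obstacle will be upgrading the pointwise Malicet conclusion to the uniform positive bound $\rho_{\bf i}(J)\ge\eta_0$ on a positive-probability event. This step relies on both the $1$-Lipschitz regularity of $\rho_{\bf i}$ (which promotes a.s.\ pointwise positivity into continuity, and continuity plus positivity into a positive infimum on compact subsets of $(0,1)$) and the fact that $J$ is compactly contained in the \emph{open} interval $(0,1)$; the whole argument would break for $J=[0,1]$ because $\rho_{\bf i}$ necessarily vanishes at the fixed points $0$ and $1$.
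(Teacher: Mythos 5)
The core idea --- a ``contracting radius'' with a uniform lower bound on $J$ --- is appealing, and your $1$-Lipschitz estimate for $\rho_{\bf i}$ is correct, but the crucial step is not justified. Theorem \ref{malicet} gives, for each \emph{fixed} $x\in(0,1)$, a full-measure set of ${\bf i}$ with $\rho_{\bf i}(x)>0$; the order of quantifiers matters here. You write that Lipschitz regularity ``promotes a.s.\ pointwise positivity into continuity, and continuity plus positivity into a positive infimum on compact subsets'', but that reasoning presupposes what needs to be shown, namely that $\rho_{\bf i}>0$ on \emph{all} of $J$ for a.e.\ ${\bf i}$. From the quoted theorem you only get that, for a.e.\ ${\bf i}$, the $1$-Lipschitz function $\rho_{\bf i}$ is positive on any prescribed countable dense subset of $J$; a nonnegative $1$-Lipschitz function can be positive on a dense set yet vanish at some point of $J$ (think of $\rho(y)=|y-c|$), so $\inf_J\rho_{\bf i}$ may still be $0$. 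No finite-cover refinement rescues this directly: as the mesh $\eta$ shrinks, the number of cover points $m(\eta)$ grows, and the individual probabilities $\mathbb P(\rho_{\bf i}(x_j)<\eta)$ need not decay fast enough to make $\sum_j\mathbb P(\rho_{\bf i}(x_j)<\eta)$ small. In short, upgrading ``$\forall x$, a.e.\ ${\bf i}$'' into ``a.e.\ ${\bf i}$, $\inf_J\rho_{\bf i}>0$'' requires additional input that is not supplied.

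The paper avoids exactly this difficulty. It applies Theorem \ref{malicet} at a \emph{single} point $x\in\supp\mu_*\cap(0,1)$, obtaining one fixed neighbourhood $I$ of $x$ and a set $\widetilde\Omega$ of positive measure on which $|f^n_{\bf i}(I)|\le q^n$ for all $n$. Then comes the extra ingredient your proposal lacks: using asymptotic stability on $(0,1)$ (Theorem \ref{T2_23.06.19}), atomlessness of $\mu_*$, and Alexandrov's theorem, one produces a finite word $(i_1,\ldots,i_r)$ with $f_{(i_1,\ldots,i_r)}(J)\subseteq I$, and sets $\Omega=\{(i_1,\ldots,i_r)\mathbf{j}:\mathbf{j}\in\widetilde\Omega\}$. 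On $\Omega$ the first $r$ iterates contribute at most $r$ to the series and the later iterates are controlled by $\sum_{m\ge1}q^m=q/(1-q)$. If you wish to keep your contracting-radius framework, you would need either a genuinely stronger (uniform-in-$x$) version of the contraction principle, or a transport step of this kind moving $J$ into a single contracting neighbourhood.
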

\begin{proof} Fix $a\in (0, 1/2)$ and choose $x\in \textrm{supp} \  \mu_*\cap (0, 1)$. From Theorem \ref{malicet} we have $q\in (0, 1)$, a neighbourhood $I$ of $x$ and  $\widetilde{\Omega}\subseteq\Sigma$ with $\mathbb P ( \widetilde{\Omega} )>0$ such that
$$
|f^n_{\bf i}(I)|\leq q^n\qquad\text{for ${\bf i}\in\widetilde{\Omega}$ and $n\in\mathbb N$.}
$$
Set $b=\inf I$. We claim that there exists $r\in\mathbb N$ and a sequence $(i_1,\ldots, i_r)\in \Sigma_r $ such that $f_{(i_1,\ldots, i_r)}([a,1-a])\subseteq I$. Suppose, contrary to our claim, that for every $r\in\mathbb N$ and $(i_1,\ldots, i_r)\in \Sigma_r$, if $f_{(i_1,\ldots, i_r)}(1-a)\in I$, then $f_{(i_1, \ldots, i_r)}(a)\leq b$. This implies that
$$
P^r\delta_{a}(([0,b])\ge P^r\delta_{1-a}([0,b]\cup I)=P^r\delta_{1-a}([0,b])+P^r\delta_{1-a}(I)
$$
for every $r\in\mathbb N$. Since the iterated function system $(f_1,\ldots, f_N; p_1,\ldots, p_N)$ is asymptotically stable and $\mu_*$ is atomless, we have $P^r\delta_{a}([0,b])\to \mu_*([0, b])$, $P^r\delta_{1-a}([0,b])\to \mu_*([0,b])$,  $P^r\delta_{1-a}(I)\to\mu_*(I)$ as $r\to \infty$,  by Alexandrov's theorem. This is a contradiction with the above inequality, for $\mu_*(I)>0$.

Now choose $(i_1,\ldots, i_r)\in \Sigma_r $ such that $f_{(i_1,\ldots, i_r)}([a,1-a])\subseteq I$ and define
$$
\Omega= \{ (i_1,\ldots, i_r){\bf i}: {\bf i}\in \widetilde{\Omega} \}\in \Sigma.
$$
We see at once that $\Omega$ satisfies the desired condition. The proof is complete.
\end{proof}


\begin{lem}
\label{return_to_J}
Let $(f_1,\ldots, f_N; p_1,\ldots, p_N)$ be an admissible iterated function system and let $\alpha, \delta, \varepsilon, M$ be the positive constants given in Lemma \ref{constants}. Assume that $J=[a,1-a]$ for some $a\in (0,\frac{1}{2})$ such that $M=\varepsilon^{-\alpha}<\frac{a^{-\alpha}}{6}$. Then there exist $n_0\in\mathbb{N}$ such that for every integer $n\geq n_0$
$$
\mathbb{P}\big(\{{\bf i}\in\Sigma: f^{\lfloor\sqrt[4]{n}\rfloor}_{\bf i}([\varepsilon_n, 1-\varepsilon_n])\subset J\}\big)\geq \frac{1}{5}.
$$

\end{lem}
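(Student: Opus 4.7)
The plan is to exploit the monotonicity of the compositions $f^k_{\bf i}$ to reduce the inclusion to two endpoint conditions, and then estimate each using the decomposition already developed in the proof of Lemma \ref{mass_boundary} together with the hypothesis $Ma^\alpha<1/6$. Writing $k=\lfloor\sqrt[4]{n}\rfloor$ and noting that every $f_i\in H^+$ is increasing, $f^k_{\bf i}([\varepsilon_n,1-\varepsilon_n])\subset J$ is equivalent to the pair of inequalities $f^k_{\bf i}(\varepsilon_n)\geq a$ and $f^k_{\bf i}(1-\varepsilon_n)\leq 1-a$. Setting $B_n:=\{{\bf i}\in\Sigma:f^k_{\bf i}(\varepsilon_n)\geq a\}$ and $C_n:=\{{\bf i}\in\Sigma:f^k_{\bf i}(1-\varepsilon_n)\leq 1-a\}$, it suffices to show $\mathbb{P}(B_n^c),\mathbb{P}(C_n^c)<1/3$ for $n$ large, because then the union bound gives $\mathbb{P}(B_n\cap C_n)\geq 1-\mathbb{P}(B_n^c)-\mathbb{P}(C_n^c)>1/3$, which is certainly at least $1/5$.

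To bound $\mathbb{P}(B_n^c)=P^k\delta_{\varepsilon_n}([0,a))$ I would repeat the split used in Lemma \ref{mass_boundary} applied to $x=\varepsilon_n$ (which lies in $(0,\varepsilon)$ for large $n$): let $E$ be the set of initial segments $(i_1,\dots,i_m)$ with $m\leq\lfloor\sqrt[4]{n}\rfloor$ that realise the first entry of $f^m_{\bf i}(\varepsilon_n)$ into $[\varepsilon,1]$, and let $F$ be the set of $(i_1,\dots,i_k)\in\Sigma_k$ for which $f^m_{\bf i}(\varepsilon_n)<\varepsilon$ throughout $m\leq\lfloor\sqrt[4]{n}\rfloor$. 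The same decomposition as in the proof of Lemma \ref{mass_boundary} reads
\[
P^k\delta_{\varepsilon_n}=\sum_{(i_1,\dots,i_m)\in E}p_{i_1}\cdots p_{i_m}P^{k-m}\delta_{f_{(i_1,\dots,i_m)}(\varepsilon_n)}+\sum_{(i_1,\dots,i_k)\in F}p_{i_1}\cdots p_{i_k}\delta_{f_{(i_1,\dots,i_k)}(\varepsilon_n)}.
\]
For $(i_1,\dots,i_m)\in E$ the point $y:=f_{(i_1,\dots,i_m)}(\varepsilon_n)$ satisfies $y\geq\varepsilon$, hence $\delta_y\in\mathcal{P}^-_{M,\alpha}$ and the invariance from Lemma \ref{constants} gives $P^{k-m}\delta_y([0,a])\leq Ma^\alpha$. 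The $F$-part has total mass at most $\gamma_n$ by \REQ{probability_A_0} (applied with $x=\varepsilon_n$, which belongs to $[\varepsilon\varepsilon_n,\varepsilon]$ for $n$ large). Therefore $P^k\delta_{\varepsilon_n}([0,a])\leq Ma^\alpha+\gamma_n$.

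By hypothesis $Ma^\alpha=(a/\varepsilon)^\alpha<1/6$, and since $\gamma_n\to 0$ we may choose $n_0$ so that $\varepsilon_n<\varepsilon$ and $\gamma_n<1/6$ for all $n\geq n_0$; then $\mathbb{P}(B_n^c)<1/3$. The fully symmetric argument using $\mathcal{P}^+_{M,\alpha}$, the point $1-\varepsilon_n$, and \REQ{probability_A_1} yields $\mathbb{P}(C_n^c)<1/3$, and the union bound completes the proof. There is no real obstacle beyond recognising that the decomposition device of Lemma \ref{mass_boundary} can be redeployed to bound a different probability; the constant $6$ in the standing hypothesis is chosen precisely so that two copies of $Ma^\alpha$, plus two vanishing $\gamma_n$ contributions, stay below $4/5$ and leave room for the union bound to succeed.
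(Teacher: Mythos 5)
Your proof is correct and takes essentially the same route as the paper: reduce to the two endpoint conditions via monotonicity, decompose by the first time the chain exits $[0,\varepsilon)$ (which is exactly the $E$/$F$ split from Lemma \ref{mass_boundary}), bound the non-exit mass by $\gamma_n$ via \REQ{probability_A_0} and the post-exit mass by $Ma^\alpha$ via invariance of $\mathcal{P}^-_{M,\alpha}$, then finish with a union bound. The paper packages the same estimate multiplicatively as $\mathbb{P}(\text{exit})\cdot(1-2Ma^\alpha)\geq\tfrac{9}{10}\cdot\tfrac{2}{3}=\tfrac{3}{5}$ per endpoint and concludes $\geq 1-\tfrac{2}{5}-\tfrac{2}{5}=\tfrac{1}{5}$, whereas your additive version $Ma^\alpha+\gamma_n<\tfrac{1}{3}$ gives the slightly cleaner $\geq\tfrac{1}{3}$; this is a cosmetic difference in bookkeeping, not in substance.
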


\begin{proof}
Let $\alpha, \delta, \varepsilon, M$ be the positive constants given in Lemma \ref{constants}. 
From Lemma \ref{constants} it follows that there exists $n_0\in\mathbb{N}$ such that for $n\ge n_0$ we have
$$
\mathbb{P}\big(\{{\bf i}\in\Sigma: \exists_{k\leq \lfloor\sqrt[4]{n}\rfloor} f_{\bf i}^k(\varepsilon_n)\geq\varepsilon \}\big)\geq\frac{9}{10}.
$$
From (\ref{delta_in_class}) and the fact that $M=\varepsilon^{-\alpha}<\frac{a^{-\alpha}}{6}$ we obtain
$$
\begin{aligned}
\mathbb{P}(\{{\bf i}\in\Omega : f_{\bf i}^{\lfloor\sqrt[4]{n}\rfloor}(\varepsilon_n)\geq a\})
&\geq \mathbb{P}\big(\{{\bf i}\in\Sigma: \exists_{k\leq \lfloor\sqrt[4]{n}\rfloor} f_{\bf i}^k(\varepsilon_n) \geq \varepsilon\}\big)\cdot(1-2Ma^\alpha)\\
&\geq \frac{9}{10}\cdot\frac{2}{3}=\frac{3}{5}.
\end{aligned}
$$
In the same way we may prove that for $n_0$ sufficiently large we have
$$
\mathbb{P}(\{{\bf i}\in\Omega : f_{\bf i}^{\lfloor\sqrt[4]{n}\rfloor}(1-\varepsilon_n)\leq 1-a\})\geq \frac{3}{5}\qquad\text{for $n\ge n_0$}.
$$
Combining these two estimates we obtain that there exists $n_0\in\mathbb N$ such that
$$
\mathbb{P}\big(\{{\bf i}\in\Sigma:  f^{\lfloor\sqrt[4]{n}\rfloor}_{\bf i}([\varepsilon_n, 1-\varepsilon_n])\subset J\}\big)\geq 1-\left(\frac 2 5 + \frac 2 5\right)=\frac{1}{5}\quad\text{for $n\ge n_0$.}
$$
The proof is complete.
\end{proof}


\begin{df}
 Let $\mathcal A\subset \Sigma\cup\Sigma_*$. We say that a sequence ${\mathbf i}\in\Sigma_*$ is dominated by $\mathcal A$ if there exists ${\mathbf j}\in\mathcal A$ such that ${\mathbf i}\prec{\mathbf j}$. Additionally, we shall assume that the empty sequence is dominated by any $\mathcal A$.
\end{df}

\begin{lem}
\label{dominating}
Let $\mathcal A\subset \Sigma$ be such that $\mathbb P(\mathcal A)\ge\beta$ for some $\beta>0$ and let $k, n\in\mathbb N$ with $k<n$. Then there exists a set $A\subset\Sigma_n$ such that $\mathbb P_n(\Sigma_n\setminus A)\le (1-\beta)^k$ and for any ${\mathbf i}\in A$ there exist ${\mathbf i}_1, {\mathbf i}_2,\ldots, {\mathbf i}_k\in\Sigma_*$ such that 
$$
{\mathbf i}={\mathbf i}_1{\mathbf i}_2\cdots{\mathbf i}_k
$$
and for $j=1,\ldots, k$ at least one of the sequences $\textbf i_j, \sigma\textbf i_j\ldots, \sigma^{k-1}\textbf i_j$, is dominated by $\mathcal A$.
\end{lem}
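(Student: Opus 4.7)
The plan is to exploit the independence of disjoint coordinates under the product measure $\mathbb{P}$ to construct $k$ independent trials, each succeeding with probability at least $\beta$, and to show that a single successful trial yields the required decomposition of $\mathbf{i}$ into $k$ blocks. The preliminary observation is that for every $m \in \mathbb{N}$, the set $\mathcal{A}^m := \{\mathbf{j} \in \Sigma_m : \mathbf{j} \text{ is dominated by } \mathcal{A}\}$ satisfies $\mathbb{P}_m(\mathcal{A}^m) \geq \mathbb{P}(\mathcal{A}) \geq \beta$, since every $\mathbf{i} \in \mathcal{A}$ projects onto a dominated length-$m$ prefix.

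Partition $\{1, 2, \ldots, n\}$ into $k$ disjoint consecutive coordinate intervals $R_1, \ldots, R_k$ and, for each $j$, define the trial event $T_j := \{\mathbf{i} \in \Sigma_n : (i_p)_{p \in R_j} \in \mathcal{A}^{|R_j|}\}$. Because $\mathbb{P}_n$ is a product measure and the $R_j$ are disjoint, the events $T_1, \ldots, T_k$ are independent, each of probability at least $\beta$, so
\[
\mathbb{P}_n\!\Big(\bigcap_{j=1}^{k} T_j^c\Big) = \prod_{j=1}^k \mathbb{P}_n(T_j^c) \leq (1-\beta)^k.
\]
Let $A$ consist of those $\mathbf{i}$ admitting the required $k$-block decomposition; I claim $\bigcup_j T_j \subseteq A$. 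On $T_{j_0}$, the segment $(i_p)_{p \in R_{j_0}}$ is dominated by $\mathcal{A}$ (so it satisfies the condition via the identity shift $\sigma^0$) and will be used as one block; the remaining $k-1$ ``filler'' blocks partition the complement of $R_{j_0}$, and---using the allowance of shifts $\sigma^r$ for $r \in \{0, \ldots, k-1\}$ together with the convention that the empty sequence is dominated---any filler of length at most $k-1$ automatically satisfies the shift-dominance condition via $\sigma^{k-1}$.

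The delicate point is that this direct construction works cleanly only when $n \leq k(k-1)$, since fitting $k$ disjoint intervals in $\{1,\dots,n\}$ whose complement can be covered by $k-1$ fillers of length at most $k-1$ requires $n/k + (k-1)^2 \geq n$. For larger $n$ one must iterate the argument: after isolating a successful ``main'' block $R_{j_0}$, the leftover coordinates form a fresh independent product-measure space of smaller length, on which the lemma may be reapplied with $k-1$ blocks via an induction on $k$. The key bookkeeping is to arrange the recursion so that failure events genuinely multiply (giving $(1-\beta)\cdot(1-\beta)^{k-1}=(1-\beta)^k$) rather than combining via a union bound. This requires that at each level of the recursion a new independent trial is introduced on previously unused coordinates, and that the full shift budget $\sigma^r$, $r \in \{0,\ldots,k-1\}$, is used to absorb any mismatched coordinates at the boundaries of sub-intervals; the main technical challenge lies in verifying that this absorption never destroys the dominance of already-constructed blocks.
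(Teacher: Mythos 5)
Your preliminary observation (that $\mathbb{P}_m(\mathcal{A}^m)\geq\beta$) and the independence bound $\mathbb{P}_n(\bigcap T_j^c)\leq(1-\beta)^k$ are correct, but the construction of $A$ has a genuine gap that you partially identify but do not resolve. With fixed block boundaries $R_1,\ldots,R_k$ of length roughly $n/k$, a successful trial on $R_{j_0}$ forces the other $k-1$ blocks of the required consecutive decomposition ${\mathbf i}={\mathbf i}_1\cdots{\mathbf i}_k$ to cover the roughly $n(k-1)/k$ coordinates outside $R_{j_0}$, while each such block can only be dismissed by $\sigma^{k-1}$ if its length is at most $k-1$; this forces $n\leq k(k-1)$. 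In the paper's application $k=\lfloor n^{1/8}\rfloor$, so $k(k-1)\ll n$ and the constraint is badly violated. The inductive repair you sketch cannot escape this: the obstruction is not bookkeeping of probabilities but the geometry of a failed trial block, since a block of length $\approx n/k$ that is not dominated must nevertheless be included somewhere in the decomposition, and neither $\sigma^{r}$ with $r\leq k-1$ nor merging it with an adjacent successful block can remove more than $k-1$ of its coordinates. Any recursion that tests fixed-size sub-intervals inherits the same problem one level down.

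The paper's proof avoids fixed block boundaries altogether and uses a sequential, stopping-time construction. The set $B_1$ consists of $n$-tuples that are themselves dominated; for $\mathbf i\notin B_1$ the first coordinate $\ell$ at which $(i_1,\ldots,i_\ell)$ ceases to be dominated is a stopping time, and conditioned on that event the coordinates $i_{\ell+1},\ldots,i_n$ remain i.i.d., so a fresh trial of success probability $\geq\beta$ can be run from position $\ell+1$. The crucial structural point, which your construction lacks, is that the waste per failed trial is a single coordinate (the coordinate that broke domination), not an entire block: $(i_1,\ldots,i_{\ell-1})$ is still dominated and becomes ${\mathbf i}_1$, while the one bad coordinate $i_\ell$ is absorbed as the shiftable prefix of the next block. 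Iterating, the accumulated waste after $l$ levels is at most $l-1\leq k-1$, exactly the shift budget allowed in the lemma, and the failure probabilities multiply because each new trial uses previously untouched coordinates. To repair your argument you would need to replace the fixed partition by such a data-dependent partition based on first-failure stopping times.
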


\begin{proof} Let $k, n\in\mathbb N$, $k<n$ and  $\mathcal A\subset \Sigma$ with $\mathbb P(\mathcal A)>0$ be given. Let $\beta>0$ be such that  $\mathbb P(\mathcal A)\ge\beta$. 
Define 
$$
B_1:=\{{\mathbf i}\in\Sigma_n: {\mathbf i}\text{ is dominated by}\,\, \mathcal A\}
$$
and observe that $\mathbb P_n(\Sigma_n\setminus B_1)\le 1-\beta$.

By $C_2$ denote the set of all ${\mathbf i}\in\Sigma_*$ such that ${\mathbf i}$ is not dominated by $B_1$ but ${\mathbf i}_{| |{\mathbf i}|-1}$ is dominated. If $C_2$ is empty, then putting $\textbf i _m$ to be the empty sequences for $m=2,\ldots, k$ will finish the proof. In the other case define
$$
B_2:=\{{\mathbf i}{\mathbf j}_{|n}: {\mathbf i}\in C_2, {\mathbf j}\text{ is dominated by}\,\, \mathcal A\},
$$
where ${\mathbf i}{\mathbf j}_{|n}$ denotes the natural projection on $\Sigma_n$ of the concatenation of ${\mathbf i}$ and ${\mathbf j}$. 
Observe that $\mathbb P_n( B_2 |\Sigma_n\setminus B_1)>\beta$.
This gives $\mathbb{P}_n(\Sigma_n \setminus (B_1\cup B_2))=\mathbb{P}_n(\Sigma_n \setminus B_1)\cdot \mathbb P_n(\Sigma_n \setminus B_2 | \Sigma_n \setminus B_1)=\mathbb{P}_n(\Sigma_n \setminus B_1)(1 - \mathbb P_n( B_2 |\Sigma_n\setminus B_1))\leq (1-\beta)^2$. Obviously, if ${\mathbf i}\in B_2$, then ${\mathbf i}={\mathbf i}_1{\mathbf i}_2$ and for $j=1, 2$ either ${\mathbf i}_j$ or $\sigma{\mathbf i}_j$ is dominated by $\mathcal A$.

Further, if $B_1,\ldots, B_{l-1}$ are given for some $l\le k$, $\mathbb P_n(\Sigma_n\setminus(B_1\cup\cdots\cup B_{l-1}))\le (1-\beta)^{l-1}$ and if ${\mathbf i}\in B_m$ for $m=1,\ldots, l-1$, then ${\mathbf i}={\mathbf i}_1\cdots{\mathbf i}_m$ and  at least one of the sequences $\textbf i_j, \sigma\textbf i_j,\ldots, \sigma^{l-2}\textbf i_j$ is dominated by $\mathcal A$ for $j=1,\ldots, m.$ Define now the set $C_l$ of all $\textbf i\in\Sigma_*$ such that for some $m=1,\ldots, l$ the sequences $\textbf i, \textbf i_{||i|-1},\ldots, \textbf i_{||i|-m+1}$ are not dominated by $B_{l-1}$, but $\textbf i_{||i|-m}$ is dominated. If $C_l$ is empty, then putting $\textbf i _m$ to be the empty sequence for $m=l,\ldots, k$ will finish the proof.  In the other case define
$$
B_l:=\{{\mathbf i}{\mathbf j}_{|n}: {\mathbf i}\in C_l, {\mathbf j}\text{ is dominated by} \mathcal A\}.
$$
Observe that
$$
\begin{aligned}
&\mathbb P_n(\Sigma_n\setminus (B_1\cup\cdots\cup B_l))\\
&=\mathbb P_n(\Sigma_n\setminus (B_1\cup\cdots\cup B_{l-1}))\mathbb P_n(\Sigma_n\setminus B_l \ | \ \Sigma_n \setminus (B_1\cup\cdots\cup B_{l-1}))\leq (1-\beta)^l.
\end{aligned}
$$
Moreover, from the definition of $B_l$ it follows that
if ${\mathbf i}\in B_l$, then ${\mathbf i}={\mathbf i}_1\cdots{\mathbf i}_l$
and at least one of the sequences $\textbf i_m, \sigma\textbf i_m,\ldots, \sigma^{l-1}\textbf i_m$ for $m=1,\ldots, l$ is dominated by $\mathcal A$. 

Taking $l=k$ and setting $A=B_1\cup\cdots\cup B_k$ we finish the proof. Indeed, if ${\mathbf i}\in A$ there exist ${\mathbf i}_1, {\mathbf i}_2,\ldots, {\mathbf i}_k\in\Sigma_*$, some of them possibly empty sequences, such that 
$
{\mathbf i}={\mathbf i}_1{\mathbf i}_2\cdots{\mathbf i}_k
$
and for $j=1,2,\ldots, k$ at least one of the sequences $\textbf i_j, \sigma\textbf i_j,\ldots, \sigma^{k-1}\textbf i_j$ is dominated by $\mathcal A$.
The proof is complete.
\end{proof}

\section{The proof of the Central Limit Theorem}

Let $(f_1,\ldots, f_N; p_1,\ldots, p_N)$ be an admissible iterated function system and let $P$ be the corresponding Markov operator. Let $(X_n)$ be the Markov chain corresponding to $P$. 
This part of the paper is devoted to the proof of the quenched central limit theorem for the random process $(\varphi(X_n))$, where $\varphi$ is a Lipschitz function. The~question of the quenched central limit theorem was raised, for reversible Markov chains, by C. Kipnis and S.R.S. Varadhan in \cite{KV}. M.D. Gordin and B.A. Lif\v{s}hits proved that if $\mu_*$ is an ergodic invariant measure for a Markov operator $P$ and $\varphi$ is an $L_2(\mu_*)$ coboundary, i.e., $\varphi\in (I-U)L_2(\mu_*)$, then the quenched central limit theorem holds for $\mu_*$--a.e. $x$ (see \cite{BorIbra}). It is worth mentioning here that in our case the support of $\mu_*$ could be a nontrivial subset of the interval $[0, 1]$. In fact, for some admissible iterated function systems the unique invariant measure $\mu_*$ is distributed on a Cantor set (see \cite{BarSpiewak}). Recently it was proved that reversible chains satisfy a quenched invariance principle for $\varphi\in (I-U)L_q(\mu_*)\cap L_p(\mu_*)$, where $1\le q\le 2$ and $p=q/(q-1)$ (see \cite{BPP}). 
Since $\varphi$ in our considerations is a bounded function to prove the quenched central  limit theorem it is enough to check that $\varphi\in (I-U)L_q(\mu_*)$ for some $1\le q\le 2$. Unfortunately, we have failed to do it. Indeed, we are unable to verify that $\sum_{n=1}^{\infty} U^n\varphi (x)$ is convergent $\mu_*$--a.s for $\varphi$ such that $\langle \mu_*, \varphi\rangle=0$. This holds true when the Markov operator $P$ satisfies the spectral gap property in the Wasserstein--Kantorovich metric. Although our system is asymptotically stable on $(0, 1)$, its rate of convergence has not been yet determined. Our proof is therefore based on the Maxwell--Woodroofe theorem (see \cite{Maxwell-Woodroofe}). In fact, this theorem allows us to prove the annealed central limit theorem. On the other hand, to prove the quenched central limit theorem for every $x\in (0, 1)$ we construct some coupling between trajectories 
and evaluate the distance between Fourier transforms for Markov processes starting at different initial points.
\vskip3mm
We are now in a position to formulate and prove the main results of our paper.

\begin{thr}\label{T1_22.06.19}[Central Limit Theorem]
Let $(f_1,\ldots, f_N; p_1,\ldots, p_N)$ be an admissible iterated function system, and let $(X_n)$ be the corresponding stationary Markov chain with the initial distribution $\mu_*$. If $\varphi:[0,1]\rightarrow\mathbb{R}$ is a Lipschitz function with $\int_{[0, 1]}\varphi d\mu_*=0$, then the random process $(\varphi(X_n))$ satisfies the central limit theorem, i.e., the limit
$$
\sigma^2:=\lim_{n\to\infty} \mathbb{E}\bigg( \frac{\varphi(X_0)+\cdots+\varphi(X_n)}{\sqrt{n}} \bigg)^2
$$
exists and
$$\frac{\varphi(X_0)+\cdots+\varphi(X_n)}{\sqrt{n}}\Rightarrow \mathcal{N}(0,\sigma)\qquad\text{as $n\to\infty$,}
$$
where $\Rightarrow$  denotes convergence in distribution.
Moreover, the same is true for the process $(\varphi(X^x_n))$, where $(X^x_n)$ is the corresponding Markov chain starting from an arbitrary point $x\in (0,1)$.
\end{thr}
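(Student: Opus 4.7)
I plan to establish the stationary (annealed) central limit theorem via the Maxwell--Woodroofe criterion, and then transfer it to every starting point $x\in(0,1)$ by a coupling estimate on characteristic functions. Uniqueness and ergodicity of $\mu_*$ (Theorem \ref{T1_23.06.19}) and asymptotic stability on $(0,1)$ (Theorem \ref{T2_23.06.19}) are invoked, but the essential ingredient is the quantitative contraction of Theorem \ref{malicet} combined with the estimates of Section 4.

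\textbf{Stationary case.} The Maxwell--Woodroofe theorem reduces the CLT for the stationary chain to the summability condition
\[
\sum_{n=1}^\infty n^{-3/2}\Bigl\|\sum_{k=0}^{n}U^k\varphi\Bigr\|_{L^2(\mu_*)} < \infty.
\]
Because $\int \varphi\,d\mu_* = 0$ and $\varphi$ is $L$-Lipschitz, invariance of $\mu_*$ yields $\int U^k\varphi\,d\mu_* = 0$, so writing $U^k\varphi(x) = \int(U^k\varphi(x)-U^k\varphi(y))\,d\mu_*(y)$ and using \eqref{e2_12.09.19} gives
\[
\|U^k\varphi\|_{L^2(\mu_*)}^2 \le L^2 \int\!\!\int \mathbb E\,|f_{\bf i}^k(x) - f_{\bf i}^k(y)|\,d\mu_*(x)\,d\mu_*(y).
\]
I would prove that the right-hand side decays faster than any polynomial in $k$ by combining Theorem \ref{malicet} (random geometric contraction on a neighborhood of every point of $(0,1)$) with the Section~4 machinery: Lemma \ref{mass_boundary} controls mass near $\{0,1\}$; Lemma \ref{return_to_J} delivers positive probability that after $\lfloor\sqrt[4]{n}\rfloor$ steps the interval $[\varepsilon_n,1-\varepsilon_n]$ lands inside the compact set $J$; Lemma \ref{omega} furnishes a positive-probability event $\Omega$ on which the diameters of the iterates of $J$ are summable; Lemma \ref{dominating} then amplifies the single-block estimate by chopping a long trajectory into independent blocks of length $\lfloor\sqrt[4]{n}\rfloor$. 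This produces a bound of the form $\|U^k\varphi\|_{L^2(\mu_*)}\le C(1-\delta)^{c\sqrt[4]{k}}$, which is summable; hence $\bigl\|\sum_{k\le n}U^k\varphi\bigr\|_{L^2(\mu_*)}$ is bounded in $n$ and the Maxwell--Woodroofe series converges.

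\textbf{Quenched transfer.} Fix $x\in(0,1)$ and let $y$ be a random variable distributed as $\mu_*$, independent of the driving noise ${\bf i}\in\Sigma$. Running both chains with the same ${\bf i}$ so that $X_k^x=f_{\bf i}^k(x)$ and $X_k^y=f_{\bf i}^k(y)$, the Lipschitz property of $\varphi$ yields
\[
\bigl|S_n^x({\bf i})-S_n^y({\bf i})\bigr| \le L\sum_{k=0}^{n}|f_{\bf i}^k(x)-f_{\bf i}^k(y)|.
\]
The same contraction apparatus, upgraded to an almost-sure summable bound on $|f_{\bf i}^k(x)-f_{\bf i}^k(y)|$ via Borel--Cantelli applied to the independent blocks produced by Lemma \ref{dominating}, gives $|S_n^x-S_n^y|/\sqrt n\to 0$ a.s. By L\'evy's continuity theorem and dominated convergence,
\[
\bigl|\mathbb E_{\delta_x}[e^{itS_n/\sqrt n}]-\mathbb E_{\mu_*}[e^{itS_n/\sqrt n}]\bigr| \le \int \mathbb E\bigl|e^{itS_n^x/\sqrt n}-e^{itS_n^y/\sqrt n}\bigr|\,d\mu_*(y) \xrightarrow[n\to\infty]{} 0,
\]
so the CLT transfers from $\mu_*$ to $\delta_x$.

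\textbf{Main obstacle.} The delicate point is the quantitative summable contraction estimate. Theorem \ref{malicet} only provides contraction on a \emph{random} neighborhood of a fixed base point, whereas both the Maxwell--Woodroofe verification and the quenched coupling require a \emph{uniform} statement (in $L^1(\mathbb P\otimes\mu_*\otimes\mu_*)$ for the former, almost surely for the latter). Upgrading local to uniform contraction is precisely the role of Lemma \ref{dominating}, which carves long trajectories into independent blocks within which a positive-probability event forces the whole good interval $[\varepsilon_n,1-\varepsilon_n]$ into the contracting region $J$; the calibration of block length $\lfloor\sqrt[4]{n}\rfloor$ against the boundary leakage $\gamma_n$ coming from Lemma \ref{mass_boundary} is what makes these estimates fit together. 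Once the summable bound is in hand, both parts of the proof are routine.
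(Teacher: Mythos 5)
Your overall architecture matches the paper's: verify the Maxwell--Woodroofe condition for the stationary chain, then transfer the CLT to arbitrary $x\in(0,1)$ by comparing characteristic functions via a coupled pair of trajectories driven by the same noise. The section-4 ingredients you name are exactly the right ones and play the roles you describe. Nevertheless there is a genuine gap in the stationary step.

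You claim that the contraction machinery produces a bound on \emph{individual} terms, $\|U^k\varphi\|_{L^2(\mu_*)}\le C(1-\delta)^{c\sqrt[4]{k}}$, from which the Maxwell--Woodroofe series converges trivially. The Section 4 lemmas do not deliver this, and the paper explicitly acknowledges they could not obtain anything of this kind: the Remark after Theorem \ref{T2_23.06.19} says they could not decide whether $\|U^nf\|\to0$, and the opening discussion of Section 5 says they were unable to verify that $\sum_{n\ge1}U^n\varphi(x)$ converges $\mu_*$-a.s., which would follow at once from your claimed bound. What the block decomposition of Lemma \ref{dominating} actually controls is the \emph{cumulative} quantity $\sum_{j=1}^{n}|f_{\bf i}^{j}(x)-f_{\bf i}^{j}(y)|$ for good ${\bf i}$: each of the roughly $n^{1/8}$ blocks contributes a burn-in cost of order $n^{1/4}$ plus a geometrically summable tail, giving $O(n^{3/8})$ overall. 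This gives $\bigl\|\sum_{j\le n}U^j\varphi\bigr\|_{L^2(\mu_*)}\le Cn^{3/8}$ and hence $\sum_n n^{-3/2}\cdot n^{3/8}<\infty$. It does \emph{not} bound $|f_{\bf i}^{n}(x)-f_{\bf i}^{n}(y)|$ at the final time, because the maps are only homeomorphisms (not contractions), so the diameter can re-expand during each new block's burn-in phase; if time $n$ happens to fall inside a burn-in the distance can again be of order $1$. So the decay of individual $\|U^k\varphi\|$ is out of reach here, and the polynomial cumulative bound is what one must target.

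Your quenched transfer is also more ambitious than necessary. Upgrading the polynomial cumulative estimate to an almost-sure statement via Borel--Cantelli over the sets $A_n\setminus D_n$ (whose complements have summable probability) is plausible, but the paper dispenses with the a.s. step: it bounds $|\Phi_n^x(t)-\Phi_n(t)|$ directly by $\tfrac{|t|}{\sqrt n}\int\int\sum_{j\le n}|\varphi(f_{\bf i}^j(x))-\varphi(f_{\bf i}^j(y))|\,d\mathbb P_n\,d\mu_*$ plus boundary/bad-set error terms, and the integrated $O(n^{3/8})$ bound kills it because $3/8<1/2$. That keeps everything at the level of expectations and avoids needing any pointwise control.
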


\begin{proof} Since $\mu_*$ is ergodic for $P$ by the uniqueness in Theorem 1, the chain is ergodic. Therefore to prove the first part of our theorem 
it is sufficient to show the following condition (see Theorem 1 in \cite{Maxwell-Woodroofe}):
\begin{equation}\label{e1_11.06.19}
\sum_{n=1}^\infty n^{-\frac{3}{2}}\big\| \sum_{j=1}^n U^j\varphi \big\|_{L^2(\mu_*)}<\infty,
\end{equation}
where $\|\cdot\|_{L^2(\mu_*)}$ denotes the $L^2-$norm with respect to the invariant measure $\mu_*$.

Let $\alpha, \delta, \varepsilon, M$ be the constants given in Lemma \ref{constants} and let $a$ be such that $M=\varepsilon^{-\alpha}<\frac{a^{-\alpha}}{6}$. Choose $n_0\in\mathbb N$ according to Lemma \ref{return_to_J}.
Put $J=[a,1-a]$ and define
$$
E_n:=\{\textbf{i}\in \Sigma: f_\textbf{i}^{\lfloor\sqrt[4]{n}\rfloor}([\varepsilon_n, 1-\varepsilon_n])\subset J\}\qquad\text{for $n\ge n_0$}.
$$
From Lemma \ref{return_to_J} we have
$$
\mathbb{P}\big(E_n\big)\geq \frac{1}{5}\qquad\text{for $n\ge n_0$}.
$$
Let $\Omega\subseteq\Sigma$ be the set given in Lemma \ref{omega} and define
$$
\mathcal{A}=\{\textbf{i}\textbf{j}_{|n}: \textbf{i}\in E_n, \textbf{j}\in\Omega\}\subseteq \Sigma_n.
$$
It is easily seen that $\mathbb{P}_n(\mathcal{A})\geq \beta>0$ for a certain $\beta$ independent of $n$. We apply Lemma \ref{dominating} to the set $\mathcal{A}$, $k=\lfloor\sqrt[8]{n}\rfloor$ for $n\ge n_0$ and obtain some sequence of sets $A_n\subseteq\Sigma_n$ with $\mathbb{P}_n(\Sigma_n\setminus A_n)\leq (1-\delta)^{\lfloor\sqrt[8]{n}\rfloor}$ such that if ${\bf i}\in A_n$, then $\textbf{i}=\textbf{i}_1\cdots\textbf{i}_k$ and for every $m=1,\ldots, k$ at least one of the sequences $\textbf i_m, \sigma\textbf i_m,\ldots, \sigma^{k-1}\textbf i_m$ is dominated by $\mathcal A$. Hence for $\textbf{i}_m$, $m=1,\ldots, k$ and for $x, y\in [\varepsilon_n, 1-\varepsilon_n]$ we have
$$\sum_{j=1}^{|\textbf{i}_m|} \big| f_\textbf{i}^j(x)- f_\textbf{i}^j(y) \big|\leq \lfloor\sqrt[4]{n}\rfloor+r+\lfloor\sqrt[8]{n}\rfloor+\frac{q}{1-q}\leq 2\bigg(\lfloor\sqrt[4]{n}\rfloor+r+\frac{q}{1-q}\bigg),
$$
where $r$ is the constant given in Lemma \ref{omega}. Further, set
$$D_n:=\{\textbf{i}\in A_n: \exists_{m\geq \lfloor\sqrt[4]{n}\rfloor} f_\textbf{i}^m(x)<\varepsilon_n \ \textrm{or}\\ \ f_\textbf{i}^m(y)>1-\varepsilon_n\\ \}$$
and notice that for any sequence $\textbf{i}\in A_n\setminus D_n$ and every $x, y\in [\varepsilon_n, 1-\varepsilon_n]$ we have
\begin{equation}\label{e1_9.12.18}
\sum_{j=1}^n \big| f_\textbf{i}^j(x)- f_\textbf{i}^j(y) \big|\leq k\left(\lfloor\sqrt[4]{n}\rfloor+r+\frac{q}{1-q}\right)\leq C_1n^{\frac{3}{8}}
\end{equation}
for some positive constant $C_1$. Furthermore, $\mathbb{P}_n(D_n)\leq 2M\gamma_n$, by Lemma \ref{mass_boundary}.

Let $B_n:=\Sigma_n\setminus A_n$. From Lemma \ref{dominating} we obtain that $\mathbb{P}_n(B_n)\leq (1-\beta)^{\lfloor\sqrt[8]{n}\rfloor}$. Denote by $L$ the Lipschitz constant of $\varphi$. Since $\int_{[0, 1]}\varphi\d\mu_*=\int_{[0, 1]} U^j\varphi\d\mu_*=0$ for $j\in\mathbb N$, we have
$$
\begin{aligned}
\int_{[0, 1]}\big|\sum_{j=1}^n U^j\varphi(x)\big|^2\mu_*(\d x)&=\int_{[0, 1]}\bigg|\int_{[0, 1]}\sum_{j=1}^n (U^j\varphi(x)-U^j\varphi(y))\mu_*(\d y)\bigg|^2\mu_*(\d x)\\
&\leq \int_{[0, 1]}\int_{[0, 1]}\bigg( \sum_{j=1}^n |U^j\varphi(x)-U^j\varphi(y)| \bigg)^2\mu_*(\d x)\mu_*(\d y).
\end{aligned}
$$
From the fact that $\mu_*([0,x])\leq Mx^\alpha$ and $\mu_*([1-x,1])\leq Mx^\alpha$ for every $x\in[0,1]$ we obtain the following inequality
$$
\begin{aligned}
&\int_{[0, 1]}\int_{[0, 1]}\bigg( \sum_{j=1}^n |U^j\varphi(x)-U^j\varphi(y)| \bigg)^2\mu_*(\d x)\mu_*(\d y)\\
&\leq\int_{[{\varepsilon_n}, {1-\varepsilon_n}]}\int_{[{\varepsilon_n}, {1-\varepsilon_n}]}\bigg( \sum_{j=1}^n |U^j\varphi(x)-U^j\varphi(y)| \bigg)^2\mu_*(\d x)\mu_*(\d y)+16\varepsilon_n^\alpha n^2 M\|\varphi\|^2.
\end{aligned}
$$
Since $\varepsilon_n=(1-\delta)^{\frac{1}{2}\lfloor\sqrt[4]{n}\rfloor}$, the last sequence is bounded by, say, $C_2>0$. However, by estimate (\ref{e1_9.12.18}) for $x, y\in [\varepsilon_n, 1-\varepsilon_n]$  we have 
$$
\begin{aligned}
&\sum_{j=1}^n |U^j\varphi(x)-U^j\varphi(y)|\leq \int_{A_n\setminus D_n}\sum_{j=1}^n|\varphi(f_{\bf i}^j(x))-\varphi(f_{\bf i}^j(y))|\textrm{d}\mathbb{P}_n({\bf i})\\
&+ \int_{D_n}\sum_{j=1}^n|\varphi(f_{\bf i}^j(x))-\varphi(f_{\bf i}^j(y))|\textrm{d}\mathbb{P}_n({\bf i})+ \int_{B_n}\sum_{j=1}^n|\varphi(f_{\bf i}^j(x))-\varphi(f_{\bf i}^j(y))|\textrm{d}\mathbb{P}_n({\bf i})\\
&\leq LC_1n^{\frac 3 8}+2nL\mathbb{P}_n(D_n)+2nL\mathbb{P}_n(B_n).
\end{aligned}
$$
By the fact that $\mathbb{P}_n(B_n)\leq (1-\beta)^{\lfloor\sqrt[8]{n}\rfloor}$ and $\mathbb{P}_n(D_n)\leq 2M\gamma_n$, the last two sequences above are bounded and therefore there exists a positive constant $C_3$ such that
$$
\sum_{j=1}^n |U^j\varphi(x)-U^j\varphi(y)|\leq C_3 n^{\frac 3 8}\qquad\text{for $x, y\in [\varepsilon_n, 1-\varepsilon_n]$.}
$$
Combining the above estimates we finally obtain
$$
\int_{[0, 1]}\int_{[0, 1]}\bigg( \sum_{j=1}^n |U^j\varphi(x)-U^j\varphi(y)| \bigg)^2\mu_*(\d x)\mu_*(\d y)\leq C_3^2n^{\frac 6 8}+C_2.
$$
Therefore there exists a positive constant $C$  such that
$$
\big\|\sum_{j=1}^n U^j\varphi\big\|_{L^2(\mu_*)}\leq Cn^{\frac{3}{8}}.
$$
This proves (\ref{e1_11.06.19}). Application of Theorem 1 in \cite{Maxwell-Woodroofe} finishes the proof for the stationary sequence $(X_n)$.

To derive the central limit theorem for $(X^x_n)$, $x\in (0,1)$, observe that by conditions (\ref{e120_23.09.19}) and (\ref{e121_23.09.19}) the characteristic functions of the processes
$$
\frac{1}{\sqrt{n}}(\varphi\big(X_0)+\cdots+\varphi(X_n)\big) \ \mathrm{and} \ \frac{1}{\sqrt{n}}(\varphi\big(X^x_0)+\cdots+\varphi(X^x_n)\big)$$
are of the form, respectively,
$$
\Phi_n(t)=\int_{[0, 1]}\int_{\Sigma_n} \exp\bigg(it\frac{\varphi(f^1_{\bf i}(y))+\cdots+\varphi(f^n_{\bf i}(y))}{\sqrt{n}}\bigg)\mathbb{P}_n(\d {\bf i})\mu_*(\d y) \ \mathrm{for} \ t\in\mathbb{R}
$$
and
$$
\Phi^x_n(t)=\int_{\Sigma_n} \exp\bigg(it\frac{\varphi(f^1_{\bf i}(x))+\cdots+\varphi(f^n_{\bf i}(x))}{\sqrt{n}}\bigg)\mathbb{P}_n(\d {\bf i}) \ \mathrm{for} \ t\in\mathbb{R}.
$$
From the first part of the proof we know that there exists $\sigma\geq 0$ such that for every $t\in\mathbb R$
$$
\Phi_n(t)\to \exp\bigg(\frac{-t^2\sigma^2}{2}\bigg)\qquad\text{as $n\to\infty$.}
$$
Therefore, to finish the proof it is sufficient to show for every $t\in\mathbb R$ the convergence
$$
\big|\Phi^x_n(t)-\Phi_n(t)\big|\to 0\qquad\text{as $n\to\infty$}.
$$

Since $|e^{itx_1}-e^{itx_2}|\leq |t||x_1-x_2|$, for every $t\in\mathbb R$, for sufficiently large $n\in\mathbb N$ and $x\in [\varepsilon_n, 1-\varepsilon_n]$ using $\mu_*\in\mathcal P_{M, \alpha}$ we have
$$
\begin{aligned}
&\big|\Phi^x_n(t)-\Phi_n(t)\big| \le\frac{|t|}{\sqrt{n}}\cdot\int_{[0, 1]}\int_{\Sigma_n}\bigg| \sum_{j=1}^n(\varphi(f^j_{{\bf i}}(x))-\varphi(f^j_{{\bf i}}(y))\bigg|\mathbb P_n(\d{\bf i})
\mu_*(\d y)\\ 
&\leq \frac{|t|}{\sqrt{n}}\int_{[{\varepsilon_n}, {1-\varepsilon_n}]} \int_{\Sigma_n}\bigg| \sum_{j=1}^n(\varphi(f^j_{{\bf i}}(x))-\varphi(f^j_{{\bf i}}(y))\bigg|\mathbb P_n(\d{\bf i})
\mu_*(\d y) +\frac{|t|}{\sqrt{n}}\cdot 2n\|\varphi\| M\varepsilon_n^\alpha\\
&\leq \frac{|t|}{\sqrt{n}}\int_{[{\varepsilon_n}, {1-\varepsilon_n}]}\int_{A_n\setminus D_n}\bigg| \sum_{j=1}^n(\varphi(f^j_{{\bf i}}(x))-\varphi(f^j_{{\bf i}}(y))\bigg|\mathbb P_n(\d{\bf i})
\mu_*(\d y)\\
& +\frac{|t|}{\sqrt{n}}\cdot 2n\|\varphi\| M\varepsilon_n^\alpha + \frac{|t|}{\sqrt{n}}\cdot 2n\|\varphi\| (\mathbb P_n(\Sigma_n\setminus A_n) +\mathbb P_n(D_n)).
\end{aligned}
$$
Since $\varepsilon_n\to 0$ as $n\to\infty$, for every $x\in (0, 1)$ and $t\in\mathbb R$, by (\ref{e1_9.12.18}) and using the estimates $\mathbb P_n(B_n)\le (1-\beta)^{\lfloor\sqrt[8]{n}\rfloor}$ and $\mathbb P_n(D_n)\le 2M\gamma_n$,
 we finally obtain
 $$
 \begin{aligned}
&\lim_{n\to\infty} |\Phi^x_n(t)-\Phi_n(t)| \leq\lim_{n\to\infty} \frac{2|t|}{\sqrt{n}}\bigg(C_1 Ln^{\frac 3 8}+n\|\varphi\|(1-\beta)^{\lfloor\sqrt[8]{n}\rfloor}\bigg)\\
&+\lim_{n\to\infty} 2|t|\|\varphi\|\sqrt{n}\bigg((1-\delta)^{\lfloor\sqrt[4]{n}\rfloor}+M\varepsilon_n^\alpha\bigg)=0.
 \end{aligned}
 $$
This completes the proof.
\end{proof}

\begin{cor} If the assumptions of Theorem \ref{T1_22.06.19} are satisfied, then the stationary sequence
$
(\frac{1}{\sqrt{n}}\sum_{k=0}^{[nt]}\varphi(X_k))
$
converges in distribution to $\sigma W(t)$, where $W(t)$, $0\le t\le 1$ is a Brownian motion. Moreover, for $\mu_*$-a.e. $x\in (0, 1)$ the sequence  $
(\frac{1}{\sqrt{n}}\sum_{k=0}^{[nt]}\varphi(X_k^x))
$
also converges in distribution to $\sigma W(t)$, $0\le t\le 1$.
\end{cor}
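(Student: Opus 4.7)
The plan is to upgrade the CLT of Theorem \ref{T1_22.06.19} to a functional (Donsker-type) invariance principle. The stationary part falls out of a functional strengthening of \cite{Maxwell-Woodroofe}; the quenched part is obtained by combining this with the coupling/characteristic-function estimate from the second half of the proof of Theorem \ref{T1_22.06.19}.

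For the stationary part, the verification of \REQ{e1_11.06.19} in Theorem \ref{T1_22.06.19} actually established the stronger bound $\|\sum_{j=1}^n U^j\varphi\|_{L^2(\mu_*)}\le C n^{3/8}$, which a fortiori implies the Maxwell--Woodroofe summability condition. A functional strengthening of \cite{Maxwell-Woodroofe}, due to Peligrad and Utev, asserts that this condition together with stationarity and ergodicity yields the weak invariance principle for Markov chains: the martingale-plus-remainder decomposition underlying Maxwell--Woodroofe lifts automatically to the process level. Applied to $(X_n,\varphi)$, this gives $\bigl(n^{-1/2}\sum_{k=0}^{[nt]}\varphi(X_k)\bigr)_{t\in[0,1]}\Rightarrow \sigma W$ in $D[0,1]$.

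For the quenched statement I would separate finite-dimensional convergence from tightness. For finite-dimensional convergence, fix $d\ge 1$, $0\le s_1<\cdots<s_d\le 1$ and $a_1,\ldots,a_d\in\mathbb R$, and rewrite $\sum_j a_j n^{-1/2}\sum_{k\le ns_j}\varphi(X_k)=n^{-1/2}\sum_{k\le ns_d}c_{n,k}\varphi(X_k)$ with coefficients $c_{n,k}=\sum_{j:\,s_j\ge k/n}a_j$ uniformly bounded by $\sum_j|a_j|$. A literal repetition of the characteristic-function argument at the end of the proof of Theorem \ref{T1_22.06.19}, based on \REQ{e1_9.12.18} together with Lemmas \ref{dominating} and \ref{mass_boundary}, shows that the joint characteristic functions under $\mathbb P_x$ and $\mathbb P_{\mu_*}$ differ by $O(n^{-1/8})$ for every $x\in(0,1)$, which combined with the stationary result gives convergence of all finite-dimensional distributions to those of $\sigma W$. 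For tightness, drive $(X^x_k)$ and a stationary copy $(X_k)$ by a common random sequence ${\bf i}\in\Sigma$; the same estimate then yields
$$
\sup_{t\in[0,1]}\Bigl|\frac{1}{\sqrt n}\sum_{k=0}^{[nt]}\bigl(\varphi(X^x_k)-\varphi(X_k)\bigr)\Bigr|\;\le\;\frac{L}{\sqrt n}\sum_{k=0}^n|f_{\bf i}^k(x)-f_{\bf i}^k(X_0)|\;=\;O\bigl(n^{-1/8}\bigr)
$$
in probability, whenever the initial point $X_0\sim\mu_*$ avoids the fixed points $\{0,1\}$---which happens $\mathbb P_{\mu_*}$-almost surely since $\mu_*\in\mathcal P_{M,\alpha}$. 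Consequently tightness of the stationary process transfers to the $x$-started process, and together with finite-dimensional convergence this yields the functional CLT under $\mathbb P_x$ for $\mu_*$-a.e.\ $x$.

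The main obstacle is the same one that drove the proof of Theorem \ref{T1_22.06.19}: the operator $P$ has no spectral gap in any classical metric, so no black-box quenched invariance principle for geometrically ergodic chains is available, and the transfer from stationary to quenched must be routed entirely through the explicit coupling built in Section~4. The $\mu_*$-a.e.\ restriction in the statement reflects precisely the place where this coupling requires the trajectories to stay away from $\{0,1\}$; with the extra work already done in Theorem \ref{T1_22.06.19} for every $x\in(0,1)$, it is plausible that the restriction can be lifted.
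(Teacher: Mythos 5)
Your stationary part coincides with the paper's: both invoke the Peligrad--Utev functional strengthening of the Maxwell--Woodroofe criterion, fed by the bound $\|\sum_{j\le n}U^j\varphi\|_{L^2(\mu_*)}\le Cn^{3/8}$ already obtained in the proof of Theorem~\ref{T1_22.06.19}. For the quenched part your route is genuinely different: the paper closes with a single citation to a quenched invariance principle of Cuny and Merlev\`ede (their Theorem~2.7, applicable under the Maxwell--Woodroofe condition), whereas you build a direct coupling. Driving $(X^x_k)$ and a stationary copy $(X_k)$ by the same noise ${\bf i}$, and using \REQ{e1_9.12.18} together with Lemmas \ref{dominating} and \ref{mass_boundary}, you get $\sup_{t\in[0,1]}\bigl|n^{-1/2}\sum_{k\le [nt]}(\varphi(X^x_k)-\varphi(X_k))\bigr|=O(n^{-1/8})$ with probability tending to $1$, because the exceptional events (${\bf i}\notin A_n\setminus D_n$, or $X_0\notin[\varepsilon_n,1-\varepsilon_n]$) have vanishing probability; since the sup-norm dominates the Skorokhod metric, the converging-together lemma transfers the stationary FCLT to the $x$-started process. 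This is correct, and it actually makes your separate finite-dimensional step redundant once the sup-norm coupling estimate is in hand. It also yields a slightly stronger conclusion than the paper states: your argument applies to \emph{every} $x\in(0,1)$ (only $x\in[\varepsilon_n,1-\varepsilon_n]$ for $n$ large is used), not merely $\mu_*$-a.e.\ $x$ as the general quenched theorem delivers. The trade-off is that your route is tied to the IFS-specific coupling machinery of Section~4, while the paper's is a one-line appeal to a general theorem.
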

\begin{proof} From the proof of Theorem \ref{T1_22.06.19}  it follows that the Maxwell--Woodroofe condition
$$
\sum_{n=1}^\infty n^{-\frac{3}{2}}\big\| \sum_{j=1}^n U^j\varphi \big\|_{L^2(\mu_*)}<\infty
$$
is satisfied. Applying Theorem 1.1 in \cite{PelUtev} we obtain the first assertion. On the other hand, the second assertion follows from Theorem 2.7 in \cite{CunyMer}.\,\,\,\end{proof}                                                                                                                                                                                                                                                                                                                                                                                                    
\vskip3mm
{\bf Acknowledgments.} The authors wish to express their gratitude to an anonymous
referee for thorough reading of the manuscript and valuable remarks.


\bibliographystyle{plain}
\bibliography{CLT_22.08.19}

\end{document}